\numberwithin{equation}{section}
\def\3bar{{|\hspace{-.02in}|\hspace{-.02in}|}}
\def\E{{\mathcal{E}}}
\def\T{{\mathcal{T}}}
\def\w{\psi}
\def\bn{{\mathbf{n}}}
\newtheorem{defi}{Definition}[section]
\newtheorem{algorithm}{Weak Galerkin Algorithm}
\title {A Weak Galerkin Finite Element Method  for A Type of Fourth Order Problem Arising From  Fluorescence Tomography}
\author{
Chunmei Wang\thanks{School of Mathematics, Georgia Institute of
Technology, Atlanta, Georgia, 30332, USA; Taizhou College, Nanjing Normal
University, Taizhou, China 225300. The research of Chunmei Wang was
supported by National Science Foundation Award
DMS-1522586 and by Jiangsu Provincial Foundation Award
BK20050538.}   \and Haomin Zhou\thanks{School of Mathematics, Georgia Institute of Technology, Atlanta, GA 30332, USA. The research of Haomin Zhou  was  supported  by  NSF  Faculty  Early  Career  
Development(CAREER)  Award  DMS-0645266,  DMS-1042998,  DMS-1419027,  and ONR Award N000141310408.}}
\begin{document}

\maketitle

\begin{abstract}
In this paper, a new and efficient numerical algorithm by using weak Galerkin (WG) finite element
methods is proposed for a type of fourth order problem arising from  fluorescence tomography(FT).  Fluorescence tomography   is an emerging, in vivo non-invasive 3-D
 imaging technique which reconstructs images that characterize the
 distribution of molecules that are tagged by fluorophores. 
 Weak second order elliptic operator and its
 discrete version
 are introduced for a class
of discontinuous functions defined on a finite element partition of
the domain consisting of general polygons or polyhedra.  An error estimate
of optimal order is derived in an $H^2$-equivalent norm for the WG
finite element solutions. Error estimates in the usual $L^2$ norm
are established, yielding optimal order of convergence for all the
WG finite element algorithms except the one corresponding to the
lowest order (i.e., piecewise quadratic elements). Some numerical
experiments are presented to illustrate the efficiency and accuracy of the numerical scheme.

% Some numerical
% experiments are presented to illustrate the efficiency and accuracy
% of the numerical scheme.
\end{abstract}

\begin{keywords} weak Galerkin, finite element methods, fourth order problem, weak second order elliptic operator,
  fluorescence tomography,   polygonal  or polyhedral meshes.
\end{keywords}

\begin{AMS}
Primary, 65N30, 65N15, 65N12, 74N20; Secondary, 35B45, 35J50, 35J35
\end{AMS}

\pagestyle{myheadings}

\section{Introduction}
This paper is concerned with new developments of numerical
methods for a type of fourth order  problem with Dirichlet and Neumann boundary conditions. The model problem seeks an unknown function
 $u=u(x)$ satisfying
\begin{equation}\label{0.1}
\begin{split}
( -\nabla\cdot(\kappa \nabla)+\mu)^2u&=f, \quad\text{in}\ \Omega,\\
u&=\xi, \quad\text{on}\ \partial\Omega,\\
\kappa  \nabla u\cdot \textbf{n} &=\nu, \quad  \text{on}\ \partial\Omega,\\
\end{split}
\end{equation}
where   $\Omega$ is an open bounded domain in $\mathbb{R}^d$($d=2,3$)
with a Lipschitz continuous boundary $\partial\Omega$, $\bn$ is the unit outward normal direction to $\partial\Omega$, $\kappa$ is a  
 symmetric and positive definite matrix-valued function, $\mu$ is  nonnegative real-valued function,
and the functions
$f$, $\xi$, and $\nu$ are given in the domain or on its boundary, as
appropriate. For convenience,   denote the second order elliptic operator  $ \nabla\cdot(\kappa \nabla)$ as $E$. For simplicity and without loss of generality,   
we assume that $\kappa$ is   piecewise constant matrix and $\mu$ is a piecewise constant.

The fourth order model problem (\ref{0.1}) arises from  fluorescence tomograph(FT)\cite{ yinkepaper, gkr2014, gz2013, ltka2015, mtspa2011, wbwm2003, wbwm2004,  yinkethesis}, which is an emerging, in vivo noninvasive 3-D
imaging technique.
FT captures molecular specific information by using highly specific fluorescent
probes and non-ionizing  NIR  radiation instead of X-ray or other powerful magnetic field \cite{p-25},  which makes FT a potentially less harmful  medical imaging modality
 compared to other medical imaging modalities, such as CT and MRI.  FT aims to 
reconstruct the
distribution of fluorophores, which are tagged with targetted molecules, from boundary measurements. Therefore,
FT has been regarded as a promising method in early cancer detection and drug monitoring nowadays
\cite{p-5, p-24, p-39}.   

We introduce the following space 
$$
H_{\kappa}^2(\Omega)=\{v:v\in H^1(\Omega),\kappa \nabla v\in H(\text{div};\Omega)\},
$$
which is equipped with the following norm
$$
\|v\|_{\kappa,2}=(\|v\|_1^2+\|\nabla\cdot (\kappa \nabla v)\|^2)^{\frac{1}{2}}.
$$

A variational formulation for the fourth order model problem (\ref{0.1}) is
given by seeking $u\in H_{\kappa}^2(\Omega)$ satisfying $u|_{\partial
\Omega}=\xi$, $\kappa  \nabla u\cdot \textbf{n}|_{\partial
\Omega}  =\nu$, such that
\begin{equation}\label{0.2}
 (Eu,Ev)+2\mu (\kappa \nabla u,\nabla v)+\mu^2(u,v)=(f,v), \quad
\forall v\in {\cal V},
\end{equation}
where  $(\cdot,\cdot)$ stands for the usual inner product in
$L^2(\Omega)$, and the test space ${\cal V}$  can be defined as
$$
{\cal V}=\{v\in H_\kappa^2(\Omega):v|_{\partial\Omega}=0,\kappa \nabla v\cdot \textbf{n}|_{\partial \Omega}=0\}.
$$
Here, $\bn$ is the unit outward normal direction to the boundary of $T$.

There have been various
conforming finite element schemes proposed for a general fourth order elliptic problem, such as the biharmonic equation,  by constructing
finite element spaces as subspaces of $H^2(\Omega)$. Such
$H^2$-conforming methods essentially require $C^1$-continuity for
the underlying piecewise polynomials (known as finite element
functions) on a prescribed finite element partition \cite{bs}.  The
$C^1$-continuity imposes an enormous difficulty in the construction
of the corresponding finite element functions in practical
computations due to the fact that the  $C^1$-continuous elements have high degrees of freedom. For example, the Argyris element has 21  degrees of freedom, 
and the Bell element has  18   degrees of freedom.
  Because of the complexity in the construction of
$C^1$-continuous elements, $H^2$-conforming finite element methods
are rarely used in practice for solving the biharmonic equation.
As an alternative approach, nonconforming and discontinuous Galerkin
finite element methods have been developed for solving the
biharmonic equation over the last several decades. The Morley
element \cite{m1968} is a well-known example of nonconforming
element for the biharmonic equation by using piecewise quadratic
polynomials. Recently, a $C^0$ interior penalty method was studied
in \cite{bs2005, eghlmt2002}. In \cite{mb2007}, a hp-version
interior-penalty discontinuous Galerkin method was developed for the
biharmonic equation. To avoid the use of $C^1$-elements, mixed
methods have been developed for the biharmonic equation by reducing
the fourth order problem to a system of two second order equations
\cite{ab1985, f1978, gnp2008, m1987, mwy3818}.

 The difference between (\ref{0.2}) and the standard
bi-harmonic equation is significant. First, the usual $H^2$
conforming elements designed for the bi-harmonic equation are no
longer $H_\kappa^2$-conforming, and thus are not applicable to the
  problem (\ref{0.2}). For some well-known non-conforming
finite elements, such as the Morley element \cite{m1968}, for the biharmonic
equation, the corresponding variational formulation involves the
full Hessian. Since it is not clear if the problem (\ref{0.2}) can
be re-formulated in an equivalent form, the applicability of such
non-conforming finite elements is highly questionable. In fact, 
we believe that they can not be directly applicable to the 
problem (\ref{0.2}).
The problem (\ref{0.2}) can also be formulated in a mixed form by
using an auxiliary variable $w=-\nabla\cdot ( \kappa \nabla u)+\mu u$. The exact mixed formulation
seeks $u, w \in H^1(\Omega)$ such that $u|_{\partial\Omega} = \xi$
and satisfying
\begin{equation}\label{BI-E-Mixed}
\begin{split}
(w, \phi) - (\kappa\nabla u, \nabla \phi) - (\mu u, \phi) & =
-\langle \nu, \phi\rangle_{\partial\Omega},\qquad \forall \phi\in
H^1(\Omega),\\
(\kappa\nabla w, \nabla v) + (\mu w, v) & = (f, v),\qquad \qquad \forall
v\in H_0^1(\Omega).
\end{split}
\end{equation}
Most of the existing finite element methods are applicable to the
mixed formulation (\ref{BI-E-Mixed}). One drawback with the mixed
formulation is the saddle-point nature of the problem, which causes
extra difficulty in the design of fast solution techniques for the
corresponding discretizations.

Recently, weak Galerkin (WG) finite element method is a new and efficient numerical
method for solving partial differential equations. The method/idea
was  first proposed by Junping Wang and Xiu Ye in 2011 for solving
second order elliptic problem in  \cite{wy2013}, and the concept
was further developed in \cite{mwy3655, ww2013, ww2014, ww2015, ww2016, wy3655, wy2707}. The central idea
of WG is to interpret partial differential operators as generalized
distributions, called weak differential operators, over the space of
discontinuous functions including boundary information. The weak
differential operators are further discretized and applied to the
corresponding variational formulations of the underlying PDEs. By
design, WG uses generalized and/or discontinuous approximating
functions on general meshes to overcome the barrier in the
construction of ``smooth'' finite element functions. The current research indicates that the concept of discrete
weak differential operators offers a new paradigm in numerical methods for partial
differential equations.

The proposed WG finite element algorithm for the fourth order problem (\ref{0.1}) is based on two new
ideas: (1) the computation of a discrete weak second order elliptic
operator locally on each element that takes
into account  the coefficient matrix from applications; and (2) a stabilizer that takes
into account the jump of the coefficient matrix from applications.
 The research is
innovative in that the proposed algorithm is the first ever finite
element method that works for the fourth order  problem (\ref{0.1}) based on the variational formulation (\ref{0.2}). 

There is a point which is of great significance   to point out in the process of our research. Actually, there is a  more straightforward 
 variational formulation for the fourth order problem  (\ref{0.1}) which is  easier to propose:  seeking $u\in H_{\kappa}^2(\Omega)$ satisfying $u|_{\partial
\Omega}=\xi$, $\kappa  \nabla u\cdot \textbf{n}|_{\partial
\Omega}  =\nu$, such that
\begin{equation}\label{0.2-2}
 (Fu,Fv)=(f,v), \quad
\forall v\in {\cal V},
\end{equation}
where $F= -\nabla\cdot(\kappa \nabla)+\mu$. However,  according to our research, we find  that variational formulation (\ref{0.2-2}) is  not an appropriate variational formulation where weak Galerkin finite element method
can  be applied   successfullly  and the code based on variational formulation (\ref{0.2-2}) cannot work well especially  when $\mu$ is not small enough. There is a straightforward reason from the formulation that 
there is no second order elliptic term in the variational formulation  (\ref{0.2-2}) which plays an important role  in the algorithm design. This fact leads to the result that
 the analysis of WG method cannot pass through based on the variational formulation  (\ref{0.2-2}) . 
 
The paper is organized as follows.  Section \ref{Section:Wpartial} is devoted to a
discussion of weak second order elliptic  operator, and weak gradient as well as  their discrete versions. In
Section \ref{Section:WGFEM}, we present a weak Galerkin algorithm
for the fourth order   model problem (\ref{0.1}) based on the variational formulation (\ref{0.2}). In Section
\ref{Section:L2projection}, we introduce some local $L^2$ projection
operators and then derive some approximation properties which are
useful in the convergence analysis. Section
\ref{Section:error-equation} will be devoted to the derivation of an
error equation for the WG finite element solution. In Section
\ref{Section:H2error}, we shall  establish an optimal order of error
estimate for the WG finite element approximation in a
$H^2$-equivalent discrete norm. In Section \ref{Section:L2error}, we
shall derive error estimates for the WG finite element  solution
 in the usual $L^2$-norm. 
Finally
in Section 8, we present some numerical results that verify the theory established in
the previous sections.

Throughout the paper, the letter $C$ is used to denote a generic
constant independent of the mesh size and functions involved.

\section{Weak Differential Operators  and Discrete Weak Differential Operators}\label{Section:Wpartial} For the fourth order    model problem
(\ref{0.1}) with the variational form (\ref{0.2}), the principle
differential operators are the   second order elliptic  operator  $E$,  and the gradient operator. Thus, we shall define
weak second order elliptic operator   and  review  the definition of weak  gradient operator,  which was first introduced in \cite{wy3655}, for a
class of discontinuous functions. For numerical purpose, we shall
also introduce a discrete version for the weak  second order elliptic  operator 
  and  review the definition of discrete  weak  gradient operator  \cite{wy3655} in the polynomial subspaces.

Let ${\cal T}_h$ be a partition of the domain $\Omega$ into polygons
in 2D or polyhedra in 3D. Assume that ${\cal T}_h$ is shape regular
in the sense  as defined in \cite{wy3655}, which is more complex than the standard definition of shape regularity in the theory of finite element methods. Denote by $\E_h$ the set
of all edges or flat faces in ${\cal T}_h$, and let
$\E_h^0=\E_h\setminus\partial\Omega$ be the set of all interior
edges or flat faces in ${\cal T}_h$.  
By a weak function on the region $T$, we mean a
function $v=\{v_0,v_b,v_g\}$ such that $v_0\in L^2(T)$,
$v_b\in L^{2}(\partial T)$ and $v_g\in L^{2}(\partial
T)$. The first and second components $v_0$ and $v_b$ can be
understood as the value of $v$ in the interior and on the boundary
of $T$. The third term $v_g $  intends to represent  the value of
$\kappa\nabla v \cdot \textbf{n}$ on the boundary of $T$, where $\textbf{n}$ is the unit outward normal direction to the boundary of $T$.
  On each  interior edge or flat face 
$e\in {\cal E}_h^0$ shared by two elements denoted by $T_L$ and $T_R$, $v_g$ has two copies of value: one as seen from the left-hand side element $T_L$ 
denoted by $v_g^L$, and the other as seen
 from the right-hand side element  $T_R$  denoted by $v_g^R$, such that $v_g^L + v_g^R = 0$.

Denote by $W(T)$ the space of all weak functions on $T$; i.e.,
$$
W(T)=\{v=\{v_0,v_b,v_g\}: v_0\in L^2(T), v_b\in
L^{2}(\partial T), v_g\in L^{2}(\partial T) \}.
$$ 

\begin{defi}\label{defition3.1}   The dual of $L^2(T)$ can be identified with
itself by using the standard $L^2$ inner product as the action of
linear functionals. With a similar interpretation,  for any $v\in
W(T)$, the weak second order elliptic  operator   of
$v=\{v_0,v_b,v_g \}$,   denoted  by
$E_{ w} v$, is defined as a linear functional in the dual space of $H^2(T)$ whose action on
each $\varphi \in H^2(T)$ is given by
 \begin{equation}\label{2.3}
 (E_{ w}v,\varphi)_T=(v_0,E\varphi)_T-
 \langle v_b ,\kappa \nabla \varphi\cdot  \textbf{n}\rangle_{\partial T}+
 \langle v_{g },\varphi  \rangle_{\partial T}.
 \end{equation}
Here,  $\langle\cdot,\cdot\rangle_{\partial T}$ stands for the usual inner product in
$L^2(\partial T)$.
\end{defi}

For computational purpose, we introduce a discrete version of the weak  second order  elliptic operator 
  by approximating $E_{ w}$ in a 
polynomial subspace of the dual of $H^2(T)$. To this end, for any
non-negative integer $r\geq 0$, denote by $P_r(T)$ the set of
polynomials on $T$ with degree no more than $r$. A discrete
 weak second order elliptic  operator, denoted by
$E_{ w,r,T}$, is defined as the unique polynomial
 $E_{ w,r,T} v\in P_r(T)$ satisfying the following equation
  \begin{equation}\label{2.4}
  (E_{ w}v,\varphi)_T=(v_0,E\varphi)_T-
 \langle v_b ,\kappa \nabla \varphi\cdot  \textbf{n}\rangle_{\partial T}+
 \langle v_{g },\varphi  \rangle_{\partial T},\quad \forall \varphi \in
 P_r(T), 
 \end{equation}
which follows from  the usual integration by parts that
 \begin{equation}\label{A.002}
(E_{w}v,
\varphi)_T=(Ev_0,\varphi)_T+\langle
v_0-v_b, \kappa\nabla\varphi\cdot \textbf{n} \rangle_{\partial T}-\langle
 \kappa \nabla v_0\cdot \textbf{n}-v_{g },\varphi  \rangle_{\partial T},\forall \varphi \in
 P_r(T).
\end{equation}

\begin{defi} \cite{wy3655}  The dual of $L^2(T)$ can be identified with
itself by using the standard $L^2$ inner product as the action of
linear functionals. With a similar interpretation,  the weak gradient  operator  of any  $v\in
W(T)$, denoted  by $ \nabla_{ w} v$,  is defined as a linear  vector functional  in the dual space of $[H^1(T)]^d$ whose action on
each $\boldsymbol{ \psi} \in [H^1(T)]^d$ is given by
 \begin{equation}\label{2.3-2}
 ( \nabla_{ w}v,\boldsymbol{ \psi})_T=-(v_0, \nabla \cdot \boldsymbol{ \psi})_T+
 \langle v_b ,\boldsymbol{ \psi}\cdot  \textbf{n}\rangle_{\partial T}.
 \end{equation}
\end{defi}

 A discrete
 weak gradient  operator, denoted by
$ \nabla_{ w,r,T}$, is defined as the unique vector polynomial
 $ \nabla_{ w,r,T} v\in [P_r(T)]^d$ satisfying the following equation
  \begin{equation}\label{2.4-2}
  ( \nabla_{ w}v,\boldsymbol{ \psi})_T=-(v_0, \nabla \cdot \boldsymbol{ \psi})_T+
 \langle v_b ,\boldsymbol{ \psi}\cdot  \textbf{n}\rangle_{\partial T},\quad \forall \boldsymbol{ \psi}\in
 [P_r(T)]^d, 
 \end{equation}
which  follows from the usual integration by parts that
 \begin{equation}\label{2.4-3}
  ( \nabla_{ w}v,\boldsymbol{ \psi})_T= (\nabla v_0,  \boldsymbol{ \psi})_T-
 \langle v_0-v_b ,\boldsymbol{ \psi}\cdot  \textbf{n}\rangle_{\partial T},\quad \forall \boldsymbol{ \psi}\in
 [P_r(T)]^d. 
 \end{equation}
\section{ Numerical Algorithm by Weak Galerkin}\label{Section:WGFEM}

For any given integer $k\geq 2$, denote by $W_k(T)$ the local  discrete
weak function space given by
\begin{equation*}
W_k(T)=\big\{v=\{v_0,v_b, v_g\}: v_0\in P_k(T), v_b\in
P_{k }(e),v_g\in  P_{k-1}(e), e\subset \partial
T\big\}.
\end{equation*}

Patching $W_k(T)$ over all the elements $T\in {\cal T}_h$ through
 the interface $\E_h^0$, we arrive at a weak finite
element space $V_h$ defined as follows
$$
V_h=\big\{v=\{v_0,v_b,v_g\}:\{v_0,v_b,v_g\}|_T\in
W_k(T), \forall T\in {\cal T}_h\big\}.
$$

Denote by $V_h^0$ the subspace of $V_h$ with vanishing trace; i.e.,
$$
V_h^0=\{v=\{v_0,v_b,v_g\}\in
V_h,v_b|_e=0,v_g|_e=0, e\subset \partial T\cap
\partial\Omega\}.
$$

Denote by
$E_{  w,k-2}$ and  $ \nabla_{  w,k-1}$ the discrete weak second order elliptic  operator  and  the discrete  weak gradient  operator  on the finite element
space $V_h$
computed by using (\ref{2.4}) and (\ref{2.4-2}) on each element $T$ for $k\geq 2$, respectively;
i.e.,
$$
(E_{ w,k-2} v)|_T=E_{ w,k-2,T}(v|_T), \qquad
v\in V_h,
$$
 $$
( \nabla_{ w,k-1} v)|_T= \nabla_{ w,k-1,T}(v|_T), \qquad
v\in V_h.
$$

For simplicity of notation and without confusion, we shall drop the
subscripts $k-2$ and $k-1$ in the notations $E_{ w,k-2}$ and $ \nabla_{ w,k-1} $, respectively. For any $u=\{u_0,u_b, u_g\}$ and
$v=\{v_0,v_b, v_g\}$ in $V_h$, we  
introduce the following notations and  a bilinear form as follows
$$
(E_{w}u,E_{w}v)_h=\sum_{T\in{\cal
T}_h} (E_{ w}u,E_{ w}v)_T, 
$$
$$
  (\kappa \nabla_w u,\nabla_w v)_h=\sum_{T\in{\cal
T}_h}(\kappa \nabla_w u,\nabla_w v)_T, 
 $$ 
$$
s(u,v)= \sum_{T\in {\cal T}_h}  h_T^{-1}\langle  \kappa \nabla
u_0 \cdot \textbf{n}- u_g, \kappa \nabla
v_0 \cdot \textbf{n}-
v_g\rangle_{\partial T}+\sum_{T\in {\cal T}_h} h_T^{-3}\langle  u_0-u_b,  
v_0-v_b\rangle_{\partial T}.
$$

For each element $T$, denote by $Q_0$ the  $L^2$ projection  onto $P_{k}(T)$. For each edge or face $e\subset\partial T$,
denote by $Q_{b}$ and $Q_{g}$ the $L^2$ projections onto $P_{k}(e)$ and $P_{k-1}(e)$, respectively. Now, for any $u\in H^2(\Omega)$, 
we can define a projection onto the weak finite element space $V_h$
such that on each element $T$,
$$
Q_hu=\{Q_0u,Q_bu,Q_g(\kappa \nabla u \cdot \textbf{n})\}.
$$ 

The following is a precise statement of the WG finite element scheme
for the fourth order  model problem (\ref{0.1}) based on the variational
formulation (\ref{0.2}).

\begin{algorithm} Find $u_h=\{u_0,u_b, u_g\}\in V_h$
satisfying $u_b=Q_{b}\xi$ and $u_g =Q_{g}\nu$  on
$\partial\Omega$, such that
\begin{equation}\label{2.7}
(E_{w} u_h,E_{w}v)_h+2\mu (\kappa \nabla_w u_h,\nabla_w v)_h+\mu^2(u_h,v) +s(u_h,v)=(f,v_0),  
\forall v \in V_h^0.
\end{equation}
\end{algorithm}

The following is a useful observation concerning the finite element
space $V_h^0$.

\begin{lemma}\label{Lemma4.1} For any $v\in V_h^0$, define $\3barv\3bar$ by
\begin{equation}\label{3barnorm}
\3barv\3bar^2=(E_{w} v,E_{w}v)_h+2\mu (\kappa \nabla_w v,\nabla_w v)_h+\mu^2(v,v) + s(v,v).
\end{equation}
Then, $\3bar\cdot\3bar$ defines a norm in the linear space $V_h^0$.
\end{lemma}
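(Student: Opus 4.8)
The plan is to verify the three defining properties of a norm on the finite-dimensional space $V_h^0$. Positivity ($\3barv\3bar\ge 0$) and homogeneity ($\3bar\lambda v\3bar=|\lambda|\,\3barv\3bar$) are immediate from the structure of (\ref{3barnorm}): each of the four terms is a sum of nonnegative quadratic quantities—$(E_w v, E_w v)_h$ and $(v,v)$ are $L^2$-type squares, $(\kappa\nabla_w v,\nabla_w v)_h$ is nonnegative because $\kappa$ is symmetric positive definite, and $s(v,v)$ is manifestly a sum of squares of $L^2(\partial T)$ norms with positive weights $h_T^{-1},h_T^{-3}$. Since $\mu\ge 0$, every coefficient is nonnegative, so the whole expression is nonnegative and scales quadratically. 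The triangle inequality follows because $\3bar\cdot\3bar$ is induced by a symmetric nonnegative bilinear form (a semi-inner-product), so Cauchy–Schwarz applies termwise. **Thus the only substantive point is definiteness:** I must show $\3barv\3bar=0$ implies $v=\{0,0,0\}$.

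**So suppose** $\3barv\3bar=0$ for some $v\in V_h^0$. Since all four terms are nonnegative, each must vanish separately. The vanishing of the stabilizer $s(v,v)=0$ forces, on every element $T$,
\begin{equation*}
\kappa\nabla v_0\cdot\mathbf{n}=v_g\quad\text{on }\partial T,\qquad v_0=v_b\quad\text{on }\partial T.
\end{equation*}
Meanwhile $(E_w v, E_w v)_h=0$ gives $E_w v=0$ on each $T$. **The key step** is to feed the stabilizer identities into the integration-by-parts formula (\ref{A.002}): substituting $v_0=v_b$ and $\kappa\nabla v_0\cdot\mathbf{n}=v_g$ into (\ref{A.002}) collapses the two boundary terms, leaving $(E_w v,\varphi)_T=(Ev_0,\varphi)_T$ for all $\varphi\in P_{k-2}(T)$. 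Combined with $E_w v=0$, this yields $(Ev_0,\varphi)_T=0$ for all $\varphi\in P_{k-2}(T)$; since $v_0\in P_k(T)$ we have $Ev_0\in P_{k-2}(T)$, so taking $\varphi=Ev_0$ gives $Ev_0=0$ on each $T$, i.e. $\nabla\cdot(\kappa\nabla v_0)=0$ pointwise on $T$.

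**Next I would** exploit the remaining terms. From $\mu^2(v,v)=0$, if $\mu>0$ we conclude $v_0=0$ directly; the delicate case is the possibility $\mu=0$ on part or all of $\Omega$, where that term gives nothing. In that regime I rely instead on the elliptic information already obtained together with the weak-gradient term. The vanishing of $(\kappa\nabla_w v,\nabla_w v)_h$ (when $2\mu>0$) forces $\nabla_w v=0$ on each $T$; substituting the stabilizer identity $v_0=v_b$ into (\ref{2.4-3}) shows $\nabla_w v=\nabla v_0$, whence $\nabla v_0=0$ on each $T$, so $v_0$ is a constant on each element. But we must treat $\mu\equiv 0$ uniformly: there, $v_0$ satisfies $\nabla\cdot(\kappa\nabla v_0)=0$ elementwise with $v_0=v_b$ continuous across interfaces and $\kappa\nabla v_0\cdot\mathbf{n}=v_g$ continuous in the sense $v_g^L+v_g^R=0$, so $v_0$ is a global $H^1_\kappa$-harmonic function; enforcing the homogeneous boundary data $v_b=0$, $v_g=0$ on $\partial\Omega$ (available since $v\in V_h^0$) and a standard energy argument—multiply $\nabla\cdot(\kappa\nabla v_0)=0$ by $v_0$ and integrate over $\Omega$, whose boundary contributions cancel against the interface continuity and the homogeneous trace—yields $\int_\Omega\kappa\nabla v_0\cdot\nabla v_0\,dx=0$, hence $\nabla v_0=0$ globally and $v_0=0$ by the vanishing trace.

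**The main obstacle I anticipate** is precisely this $\mu\equiv 0$ case, where one cannot read off $v_0=0$ from a single term and must instead assemble a global energy identity. The subtlety is justifying that the element-by-element integration by parts sums cleanly across $\E_h^0$: the interior-edge contributions must cancel, which requires using the continuity of $v_b$ (from $v_0=v_b$ on each side matching the common trace) together with the sign condition $v_g^L+v_g^R=0$ on interior faces built into the definition of a weak function. Once $v_0\equiv 0$ is established, the stabilizer identities immediately give $v_b=v_0=0$ and $v_g=\kappa\nabla v_0\cdot\mathbf{n}=0$ on all of $\partial T$, so $v=\{0,0,0\}$ and definiteness holds; this completes the verification that $\3bar\cdot\3bar$ is a norm on $V_h^0$.
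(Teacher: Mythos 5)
Your proof is correct and follows essentially the same route as the paper: vanishing of the stabilizer and of $E_{w}v$, combined with the integration-by-parts identity (\ref{A.002}) and the fact that $Ev_0\in P_{k-2}(T)$ for piecewise constant $\kappa$, gives $Ev_0=0$ elementwise, and then a global energy argument whose interface terms cancel (using $v_0=v_b$, $v_g^L+v_g^R=0$, and the homogeneous boundary data) yields $\nabla v_0=0$ and hence $v=0$. Your extra case split on $\mu>0$ is a harmless shortcut; the paper simply runs the energy argument uniformly for all $\mu\geq 0$, which your $\mu\equiv 0$ branch reproduces.
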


\begin{proof} It is easily seen that $\3barv\3bar$ defines a semi norm on the finite element
space $V_h^0$ when we write  the term $2\mu (\kappa \nabla_w v,\nabla_w v)_h$ as $2\mu (\kappa^{\frac{1}{2}} \nabla_w v,\kappa^{\frac{1}{2}}\nabla_w v)_h$. 
We shall only verify the positivity property for
$\3bar\cdot\3bar$. To this end, assume that $\3barv\3bar=0$ for some
$v\in V_h^0$. It follows from (\ref{3barnorm}) that
$E_{w}v=0$  on each element  $T$, $ \kappa \nabla
v_0 \cdot \textbf{n}= v_g$ and
$ v_0=v_b$ on each $\partial T$. Thus, for any $\varphi \in P_{k-2}(T)$,
from  (\ref{A.002}), we 
obtain
\begin{equation*}
\begin{split}
0=&(E_{ w}v,\varphi)_T\\
 =&(Ev_0,\varphi)_T+\langle
v_0-v_b, \kappa\nabla\varphi\cdot \textbf{n} \rangle_{\partial T}-\langle
 \kappa \nabla v_0\cdot \textbf{n}-v_{g },\varphi\rangle_{\partial T}\\
 =&(Ev_0,\varphi)_T,\\
 \end{split}
\end{equation*} 
 which implies that $E v_0 =0$ on
each element $T$. Thus, from integration by parts, we obtain
\begin{equation*}
 \begin{split}
  0&=
\sum_{T\in {\cal T}_h}(Ev_0, v_0)_T\\&=\sum_{T\in {\cal T}_h}-(\kappa\nabla v_0,\nabla v_0)_T+\langle \kappa\nabla v_0\cdot \bn,v_0\rangle_{\partial T}\\
&=\sum_{T\in {\cal T}_h}-(\kappa\nabla v_0,\nabla v_0)_T+\langle \kappa\nabla v_0\cdot \bn-v_g,v_0\rangle_{\partial T}\\
&=\sum_{T\in {\cal T}_h}-(\kappa\nabla v_0,\nabla v_0)_T,
 \end{split}
\end{equation*}
where we have used the fact that the sum for the terms associated
with $v_g  $ vanishes  (note that 
 $v_g$ vanishes on $\partial T\cap\partial\Omega$).
 This implies that $\nabla v_0=0$ on
each element $T$.  Thus,  $v_0$ is a constant on
each element $T$, which, together with the fact that $ v_0=v_b$ on each
$\partial T$,   indicates that $v_0$ is continuous over the whole
domain $\Omega$. Thus, we obtain that $v_0=C$ in $\Omega$.    Together with the facts  that  $ v_0=v_b$ on
$\partial T$ and $v_b|_{\partial T\cap \partial\Omega}=0$,  we obtain $v_0=0$ in $\Omega$, which, 
combining with the facts that $ v_0=v_b$ on each
$\partial T$ and  $v_b|_{\partial T\cap \partial\Omega}=0$  indicates that $v_b=0$ in $\Omega$. Furthermore,
 the facts  that    $ \kappa \nabla
v_0 \cdot \textbf{n}= v_g$  on  each $\partial T$ and  $v_g|_{\partial T\cap \partial\Omega}=0$   yield   $v_g=0$ in $\Omega$. Thus, $v=0$ in $\Omega$.
This completes the proof of the
lemma.
\end{proof}

\begin{lemma}\label{Lemma4.2} The weak Galerkin algorithm (\ref{2.7}) has a
unique solution.
\end{lemma}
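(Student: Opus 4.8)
The plan is to prove uniqueness of the solution to the linear system \eqref{2.7} by a standard dimension-counting argument: since the algorithm amounts to a square linear system on the finite-dimensional space $V_h^0$ (after subtracting a fixed function that matches the boundary data $u_b = Q_b\xi$, $u_g = Q_g\nu$), existence and uniqueness are equivalent, so it suffices to show that the associated homogeneous problem has only the trivial solution. First I would reduce to the homogeneous case: if $u_h^{(1)}$ and $u_h^{(2)}$ are two solutions, their difference $v = u_h^{(1)} - u_h^{(2)}$ lies in $V_h^0$ (the boundary components cancel) and satisfies the homogeneous equation
\begin{equation*}
(E_{w} v, E_{w} w)_h + 2\mu(\kappa\nabla_w v, \nabla_w w)_h + \mu^2(v, w) + s(v, w) = 0, \quad \forall w \in V_h^0.
\end{equation*}

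Next I would test this identity against $w = v$ itself. The left-hand side then becomes exactly
\begin{equation*}
(E_{w} v, E_{w} v)_h + 2\mu(\kappa\nabla_w v, \nabla_w v)_h + \mu^2(v, v) + s(v, v) = \3bar v\3bar^2,
\end{equation*}
by the definition \eqref{3barnorm} of the triple-bar quantity. Hence $\3bar v\3bar^2 = 0$, which forces $\3bar v\3bar = 0$. At this point I would invoke Lemma \ref{Lemma4.1}, which establishes that $\3bar\cdot\3bar$ is a genuine norm on $V_h^0$; the positivity property proved there gives $v = 0$, i.e. $u_h^{(1)} = u_h^{(2)}$. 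This establishes uniqueness.

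Finally, to conclude existence as well, I would note that \eqref{2.7} is a finite-dimensional linear system: the number of unknowns (the free degrees of freedom in $V_h^0$) equals the number of equations (testing against a basis of $V_h^0$). For such a square system, the homogeneous problem having only the zero solution—which is precisely what the argument above shows—implies that the coefficient matrix is nonsingular, and therefore the inhomogeneous problem \eqref{2.7} with the prescribed boundary data is solvable and the solution is unique. I do not anticipate a genuine obstacle here: the entire content has been front-loaded into Lemma \ref{Lemma4.1}, where the delicate positivity argument (showing $v_0$ is a global constant and then zero via the boundary conditions) was already carried out. The only care needed is the clean reduction to the homogeneous system and the correct bookkeeping that the boundary data enter affinely, so that the difference of two solutions indeed lands in $V_h^0$ where the coercivity of $\3bar\cdot\3bar$ applies.
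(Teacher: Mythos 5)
Your proof is correct and follows essentially the same route as the paper: form the difference of two solutions, test the resulting homogeneous equation against the difference itself to obtain $\3bar u_h^{(1)}-u_h^{(2)}\3bar = 0$, and invoke Lemma \ref{Lemma4.1}. Your closing remark on existence via nonsingularity of the square linear system is a welcome addition that the paper leaves implicit, but it does not change the substance of the argument.
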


\begin{proof} Let $u_h^{(1)}$ and $u_h^{(2)}$ be two different
solutions of  the weak Galerkin algorithm (\ref{2.7}). It is clear
that the difference  $e_h=u_h^{(1)}-u_h^{(2)}$ is a finite element
function in $V_h^0$ satisfying
\begin{equation}\label{2.10}
 (E_{w} e_h,E_{w}v)_h+2\mu (\kappa \nabla_w e_h,\nabla_w v)_h+\mu^2(e_h,v)  + s(e_h,v)=0, \quad \forall v \in V_h^0.
\end{equation}
By setting  $v=e_h$ in  (\ref{2.10}), we obtain
$$
 (E_{w} e_h,E_{w}e_h)_h+2\mu (\kappa \nabla_w e_h,\nabla_w e_h)_h+\mu^2(e_h,e_h) + s(e_h,e_h)=0.
$$
From Lemma \ref{Lemma4.1}, we get $e_h\equiv 0$, i.e.,
$u_h^{(1)}=u_h^{(2)}$. This completes the proof.
\end{proof}
 
\section{$L^2$ Projections and Their
Properties}\label{Section:L2projection}
The goal of this section is to establish some technical results for
the $L^2$ projections. These results are valuable in the error
analysis for the WG finite element method.

For any $T\in\T_h$, let $\varphi$ be a regular function in $H^1(T)$.
The following trace inequality holds true \cite{wy3655}:
\begin{equation}\label{trace-inequality}
\|\varphi\|_e^2 \leq C
(h_T^{-1}\|\varphi\|_T^2+h_T\|\nabla\varphi\|_T^2).
\end{equation}
If $\varphi$ is a polynomial on the element $T\in \T_h$, then we
have from the inverse inequality (see also \cite{wy3655}) that
\begin{equation}\label{x}
\|\varphi\|_e^2 \leq C h_T^{-1}\|\varphi\|_T^2.
\end{equation}
Here $e$ is an edge or flat face on the boundary of $T$.

On each element $T\in {\cal T}_h$, define ${\cal
Q}_h$   the local $L^2$ projection onto $P_{k-2}(T)$ and  ${\cal
Q}_{1}$  the local $L^2$ projection onto $P_{k-1}(T)$.

\begin{lemma}\label{Lemma5.1}  The
$L^2$ projections $Q_h$, ${\cal
Q}_{1}$  and ${\cal Q}_h$ satisfy the following
commutative properties:
\begin{equation}\label{l}
E_{w}(Q_h w)={\cal Q}_h(E w),\qquad \forall w\in H_\kappa^2(T),
\end{equation}
\begin{equation}\label{l-2}
\nabla_{w}(Q_h w)={\cal Q}_1( \nabla w),\qquad \forall w\in H^1(T).
\end{equation}
\end{lemma}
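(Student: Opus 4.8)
The plan is to prove the two identities by a single mechanism: apply the defining relation of the discrete weak operator to $Q_hw$, test against an arbitrary polynomial from the defining space, strip off the $L^2$ projections $Q_0,Q_b,Q_g$ via a degree-counting argument, reassemble by integration by parts, and recognize the outcome as the ordinary $L^2$ projection of the strong operator applied to $w$.

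For the first identity (\ref{l}), I would start from the definition (\ref{2.4}) of $E_w$ with $r=k-2$: for any $\varphi\in P_{k-2}(T)$,
$$
(E_w(Q_hw),\varphi)_T=(Q_0w,E\varphi)_T-\langle Q_bw,\kappa\nabla\varphi\cdot\bn\rangle_{\partial T}+\langle Q_g(\kappa\nabla w\cdot\bn),\varphi\rangle_{\partial T}.
$$
The crucial observation is that each test quantity generated by $\varphi$ lands in the polynomial space onto which the corresponding projection maps. Since $\kappa$ is piecewise constant, $E\varphi\in P_{k-4}(T)\subset P_k(T)$, so $Q_0$ may be dropped against it; on each face $\kappa\nabla\varphi\cdot\bn|_e\in P_{k-3}(e)\subset P_k(e)$, so $Q_b$ may be dropped; and $\varphi|_e\in P_{k-2}(e)\subset P_{k-1}(e)$, so $Q_g$ may be dropped. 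This collapses the right-hand side to $(w,E\varphi)_T-\langle w,\kappa\nabla\varphi\cdot\bn\rangle_{\partial T}+\langle\kappa\nabla w\cdot\bn,\varphi\rangle_{\partial T}$, where $\kappa\nabla w\cdot\bn$ makes sense on $\partial T$ because $w\in H_\kappa^2(T)$ means $\kappa\nabla w\in H(\mathrm{div};T)$.

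Next I would apply Green's formula for $E=\nabla\cdot(\kappa\nabla)$, using the symmetry of $\kappa$ so that $(\nabla w,\kappa\nabla\varphi)_T=(\kappa\nabla w,\nabla\varphi)_T$. The first two terms combine into $-(\kappa\nabla w,\nabla\varphi)_T$, and adding the remaining boundary term $\langle\kappa\nabla w\cdot\bn,\varphi\rangle_{\partial T}$ reproduces exactly $(Ew,\varphi)_T$. Since $\varphi\in P_{k-2}(T)$ and ${\cal Q}_h$ is the $L^2$ projection onto $P_{k-2}(T)$, we have $(Ew,\varphi)_T=({\cal Q}_h(Ew),\varphi)_T$. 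As both $E_w(Q_hw)$ and ${\cal Q}_h(Ew)$ belong to $P_{k-2}(T)$ and their $L^2$ inner products agree against every $\varphi\in P_{k-2}(T)$, they must coincide, which is (\ref{l}).

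For the gradient identity (\ref{l-2}) I would run the identical argument using (\ref{2.4-2}) with $r=k-1$: for $\boldsymbol{\psi}\in[P_{k-1}(T)]^d$ one has $\nabla\cdot\boldsymbol{\psi}\in P_{k-2}(T)\subset P_k(T)$ and $\boldsymbol{\psi}\cdot\bn|_e\in P_{k-1}(e)\subset P_k(e)$, so $Q_0$ and $Q_b$ drop out, a single integration by parts turns the right-hand side into $(\nabla w,\boldsymbol{\psi})_T$, and the defining property of ${\cal Q}_1$ on $[P_{k-1}(T)]^d$ identifies this with $({\cal Q}_1(\nabla w),\boldsymbol{\psi})_T$. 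The only point requiring genuine care is the degree bookkeeping above, namely that the projection targets of degrees $k$, $k$, and $k-1$ are large enough to absorb $E\varphi$, $\kappa\nabla\varphi\cdot\bn$, and $\varphi$ respectively; this is precisely where the choice of the component spaces in $W_k(T)$ and the orders $k-2$ and $k-1$ of the discrete operators are calibrated, and everything else is routine integration by parts.
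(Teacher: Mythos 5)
Your proof is correct and follows essentially the same route as the paper's: apply the defining relation (\ref{2.4}) (resp. (\ref{2.4-2})) to $Q_hw$, drop the projections $Q_0$, $Q_b$, $Q_g$ against the polynomial test quantities, integrate by parts to recover $(Ew,\varphi)_T$ (resp. $(\nabla w,\boldsymbol{\psi})_T$), and invoke the defining property of ${\cal Q}_h$ (resp. ${\cal Q}_1$). The only difference is that you spell out the degree-counting justification for removing the projections and the final uniqueness step, both of which the paper leaves implicit.
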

\begin{proof} As to (\ref{l}), for any  $\varphi\in P_{k-2}(T)$ and $w\in H_\kappa^2(T)$, from the definition (\ref{2.4}) of $E_{ w}$
and the usual integration by parts, we have
\begin{equation*}
\begin{split}
(E_{ w}(Q_h w),\varphi)_T&=(Q_0
w,E \varphi)_T-\langle  Q_b w,\kappa \nabla \varphi\cdot
\textbf{n} \rangle_{\partial T}+
\langle   Q_g(\kappa \nabla w \cdot \textbf{n}) ,\varphi \rangle_{\partial T}\\
& =( 
w,E \varphi)_T -\langle  w, \kappa  \nabla \varphi\cdot
\textbf{n} \rangle_{\partial T}+
\langle \kappa \nabla w \cdot \textbf{n},\varphi  \rangle_{\partial T}\\
& =( \varphi
,E w)_T  \\
& =( \varphi
,{\cal Q}_h(Ew))_T.  
\end{split}
\end{equation*}

As to (\ref{l-2}), for any  $\boldsymbol{\psi}\in [P_{k-1}(T)]^d$ and $w\in H^1(T)$, from the definition  (\ref{2.4-2})  of $ \nabla_{ w}$
and the usual integration by parts, we have
\begin{equation*}
\begin{split}
( \nabla_{ w}(Q_h w),\boldsymbol{\psi})_T&=- (Q_0
w,\nabla \cdot \boldsymbol{\psi})_T+\langle  Q_b w,\boldsymbol{\psi} \cdot
\textbf{n} \rangle_{\partial T} \\
& =- ( 
w,\nabla \cdot \boldsymbol{\psi})_T+\langle   w,\boldsymbol{\psi} \cdot
\textbf{n} \rangle_{\partial T}\\
& =(  \boldsymbol{\psi}
, \nabla w)_T  \\
& =( \boldsymbol{\psi} 
,{\cal Q}_1( \nabla w))_T,   
\end{split}
\end{equation*}
which completes the proof.
\end{proof}

 The following lemma provides some approximation properties for the
projection operators $Q_h$, ${\cal Q}_1$ and ${\cal Q}_h$.

\begin{lemma}\label{Lemma5.2}\cite{mwy0927, wy3655}  Let ${\cal T}_h$ be a
finite element partition of  $\Omega$ satisfying the shape
regularity assumption as defined in \cite{wy3655}. Then, for any
$0\leq s\leq 2$ and $2\leq m\leq k$, there exists a constant $C$ such that
the following estimates hold true:
\begin{equation}\label{3.2}
\sum_{T\in {\cal T}_h}h_T^{2s}\|u-Q_0u\|^2_{s,T}\leq Ch^{2(m+1)}\|u\|_{m+1}^2,
\end{equation}
\begin{equation}\label{3.3}
\sum_{T\in {\cal
T}_h}h_T^{2s}\|Eu-{\cal
Q}_hEu\|^2_{s,T}\leq Ch^{2(m-1)}\|u\|_{m+1}^2,
\end{equation}
\begin{equation}\label{3.2-2}
\sum_{T\in {\cal
T}_h} h_T^{2s} \| \kappa\nabla u -    {\cal Q}_1 (\kappa  \nabla u )\|^2_{s,T}\leq Ch^{2m}\|u\|_{m+1}^2.
\end{equation}
\end{lemma}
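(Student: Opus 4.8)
The plan is to reduce all three estimates to a single, standard elementwise approximation property of the $L^2$ projection onto a polynomial space, and then to sum over the partition. Concretely, for a shape-regular element $T$ in the sense of \cite{wy3655}, let $\pi_r$ denote the $L^2$ projection onto $P_r(T)$. The ingredient I would record first is the local bound
\begin{equation*}
\|w-\pi_r w\|_{s,T}\leq C\, h_T^{t-s}\,\|w\|_{t,T},\qquad 0\leq s\leq t\leq r+1,
\end{equation*}
valid for every $w\in H^t(T)$, which follows from the Bramble--Hilbert lemma together with a scaling argument; on the general polygons and polyhedra considered here this is precisely the content of the approximation results of \cite{mwy0927, wy3655}. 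With this in hand, each of \eqref{3.2}, \eqref{3.3}, \eqref{3.2-2} is obtained by choosing the correct polynomial degree $r$, identifying the Sobolev smoothness $t$ of the projected quantity (so that $s$ ranges over $s\leq t$, the regime in which the left-hand seminorms are finite), inserting the weight $h_T^{2s}$, and summing over $T\in\T_h$ using $h_T\leq h$.

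For \eqref{3.2} the projected function is $u$ itself and $\pi_r=Q_0$ projects onto $P_k(T)$, so I take $r=k$ and $t=m+1$; the hypothesis $2\leq m\leq k$ guarantees $t\leq r+1$. The local bound gives $\|u-Q_0u\|_{s,T}\leq C h_T^{m+1-s}\|u\|_{m+1,T}$, whence $h_T^{2s}\|u-Q_0u\|_{s,T}^2\leq C h_T^{2(m+1)}\|u\|_{m+1,T}^2$, and summation yields the claim. Estimate \eqref{3.2-2} is handled identically after observing that, since $\kappa$ is piecewise constant, $\kappa\nabla u\in[H^m(T)]^d$ with $\|\kappa\nabla u\|_{m,T}\leq C\|u\|_{m+1,T}$; here $\pi_r={\cal Q}_1$ projects onto $P_{k-1}(T)$, so $r=k-1$ and $t=m$, producing the exponent $h_T^{2m}$ once the weight is inserted.

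Estimate \eqref{3.3} is the one that requires care, and it is where the second-order nature of $E$ enters. Because $\kappa$ is piecewise constant, $Eu=\nabla\cdot(\kappa\nabla u)$ consumes two derivatives of $u$, so $Eu\in H^{m-1}(T)$ with $\|Eu\|_{m-1,T}\leq C\|u\|_{m+1,T}$ and no more regularity may be assumed. Taking $\pi_r={\cal Q}_h$ with $r=k-2$ and $t=m-1$ (again $t\leq r+1$ by $m\leq k$), the local bound gives $\|Eu-{\cal Q}_hEu\|_{s,T}\leq C h_T^{(m-1)-s}\|u\|_{m+1,T}$, so that $h_T^{2s}\|Eu-{\cal Q}_hEu\|_{s,T}^2\leq C h_T^{2(m-1)}\|u\|_{m+1,T}^2$; summation gives \eqref{3.3}. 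The loss of two powers of $h$ relative to \eqref{3.2} is exactly accounted for by the two derivatives taken by $E$.

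The only genuine obstacle is the first, elementwise, step: on arbitrary polygons and polyhedra the classical affine-reference-element argument does not apply, so the uniform validity of the local approximation bound rests on the specialized shape-regularity hypothesis of \cite{wy3655} and the associated trace and inverse inequalities \eqref{trace-inequality}--\eqref{x}. Once that local estimate is granted, the remainder is the routine bookkeeping of degrees, smoothness indices, and the weight $h_T^{2s}$ described above, together with $h_T\leq h$ in the final summation.
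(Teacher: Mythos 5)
Your proposal is correct, and in fact there is nothing in the paper to compare it against: Lemma \ref{Lemma5.2} is stated with citations to \cite{mwy0927, wy3655} and is not proved in the paper at all. Your reduction to the elementwise $L^2$-projection bound via Bramble--Hilbert and scaling, with the degree/smoothness bookkeeping $r=k,\ t=m+1$ for \eqref{3.2}, $r=k-2,\ t=m-1$ for \eqref{3.3}, and $r=k-1,\ t=m$ for \eqref{3.2-2}, is exactly the standard argument underlying those references, and your caveat that the bounds are only meaningful for $s\leq t$ is consistent with how the paper itself uses the lemma (the $\delta_{m,2}\|u\|_4$ correction appearing in \eqref{3.6} is precisely the repair needed when $s$ exceeds the smoothness of $Eu$).
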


\begin{lemma}\label{Lemma5.3} Let $2\leq m\leq k$ and $u\in
H^{\max\{m+1,4\}}(\Omega)$. There exists a constant $C$ such that
the following estimates hold true:
\begin{equation}\label{3.5}
\Big(\sum_{T\in {\cal
T}_h} h_T\|Eu-{\cal
Q}_h(Eu)\|_{\partial T}^2\Big)^{\frac{1}{2}}\leq
Ch^{m-1}\|u\|_{m+1},
\end{equation}
\begin{equation}\label{3.6}
%\begin{split}
\Big(\sum_{T\in {\cal
T}_h} h_T^3\|\kappa \nabla(E
 u-{\cal Q}_h(Eu)) \cdot \textbf{n}\|_{\partial T}^2\Big)^{\frac{1}{2}}\\
\leq Ch^{m-1}(\|u\|_{m+1}+h\delta_{m,2}\|u\|_4),
%\end{split}
\end{equation}
\begin{equation}\label{3.7}
\Big(\sum_{T\in {\cal T}_h}h_T^{-1}\| \kappa\nabla (Q_0u)\cdot \textbf{n}-Q_g(\kappa \nabla
u\cdot \textbf{n})\|_{\partial T}^2\Big)^{\frac{1}{2}}\leq Ch^{m-1}\|u\|_{m+1},
\end{equation}
\begin{equation}\label{3.8}
\Big(\sum_{T\in {\cal T}_h}h_T^{-3}\| 
  Q_0u - Q_bu\|_{\partial T}^2\Big)^{\frac{1}{2}}\leq
 Ch^{m-1}\|u\|_{m+1},
\end{equation}
\begin{equation}\label{3.8-2}
\Big(\sum_{T\in {\cal T}_h} h_T^3\|(\kappa\nabla u -    {\cal Q}_1 (\kappa  \nabla u ))\cdot \bn \|_{\partial T}^2\Big)
^{\frac{1}{2}}\leq
 Ch^{m+1}\|u\|_{m+1}.
\end{equation}
Here $\delta_{i,j}$ is the usual Kronecker's delta with value $1$
when $i=j$ and value $0$ otherwise.
\end{lemma}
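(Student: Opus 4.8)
The plan is to reduce each boundary norm over $\partial T$ to interior norms over $T$ by means of the trace inequality (\ref{trace-inequality}), and then to invoke the approximation estimates of Lemma \ref{Lemma5.2}. All five bounds share this skeleton; only the bookkeeping of powers of $h_T$ and the choice of which of (\ref{3.2})--(\ref{3.2-2}) to apply differs. For (\ref{3.5}) I would apply (\ref{trace-inequality}) to $\varphi=Eu-\mathcal{Q}_h(Eu)$, multiply by $h_T$ and sum, reducing the right-hand side to $\sum_T(\|Eu-\mathcal{Q}_h Eu\|_{0,T}^2+h_T^2\|Eu-\mathcal{Q}_h Eu\|_{1,T}^2)$; the estimate (\ref{3.3}) with $s=0$ and $s=1$ controls both pieces by $Ch^{2(m-1)}\|u\|_{m+1}^2$. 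The bound (\ref{3.8-2}) is entirely analogous: apply (\ref{trace-inequality}) to $\kappa\nabla u-\mathcal{Q}_1(\kappa\nabla u)$, multiply by $h_T^3$, and bound the resulting terms $\sum_T h_T^2\|\cdot\|_{0,T}^2$ and $\sum_T h_T^4\|\cdot\|_{1,T}^2$ by (\ref{3.2-2}) with $s=0,1$; here the powers of $h_T$ combine so that the native rate $h^{2m}$ of (\ref{3.2-2}) is raised to $h^{2(m+1)}$, which is exactly the faster convergence asserted.

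Estimates (\ref{3.7}) and (\ref{3.8}) rest on a projection identity that removes the boundary data altogether. Since $\kappa$ is piecewise constant and $\bn$ is constant on each flat face $e$, the trace $\kappa\nabla(Q_0u)\cdot\bn|_e$ lies in $P_{k-1}(e)$, so $Q_g$ acts as the identity on it; consequently $\kappa\nabla(Q_0u)\cdot\bn-Q_g(\kappa\nabla u\cdot\bn)=Q_g\bigl(\kappa\nabla(Q_0u-u)\cdot\bn\bigr)$, and the $L^2$-boundedness of $Q_g$ gives $\|\cdot\|_{\partial T}\le C\|\nabla(Q_0u-u)\|_{\partial T}$. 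In the same way $Q_0u|_e\in P_k(e)$ is fixed by $Q_b$, so $Q_0u-Q_bu=Q_b(Q_0u-u)$ and $\|Q_0u-Q_bu\|_{\partial T}\le\|Q_0u-u\|_{\partial T}$. After this reduction, (\ref{trace-inequality}) followed by the un-summed form of (\ref{3.2}), namely $\|u-Q_0u\|_{s,T}\le Ch_T^{m+1-s}\|u\|_{m+1,T}$, delivers $Ch^{2(m-1)}\|u\|_{m+1}^2$ in both cases; the prefactors $h_T^{-1}$ and $h_T^{-3}$ are matched precisely by the powers of $h_T$ gained from approximation.

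The one genuinely delicate point is (\ref{3.6}), and I expect it to be the main obstacle. Applying (\ref{trace-inequality}) to $\nabla(Eu-\mathcal{Q}_h Eu)$ and multiplying by $h_T^3$ produces the two terms $\sum_T h_T^2\|Eu-\mathcal{Q}_h Eu\|_{1,T}^2$ and $\sum_T h_T^4|Eu-\mathcal{Q}_h Eu|_{2,T}^2$. The first is controlled by (\ref{3.3}) with $s=1$ for every $m\ge2$ and supplies the $\|u\|_{m+1}$ contribution. The second would follow from (\ref{3.3}) with $s=2$, but that is only legitimate when $Eu\in H^2(T)$, i.e. when $m\ge3$; for $m=2$ the generic rate $h^{2(m-1)}$ is simply not available at $s=2$, since $\|Eu\|_{2,T}$ already requires $u\in H^4$ rather than $u\in H^{m+1}=H^3$. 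This is exactly where the hypothesis $u\in H^{\max\{m+1,4\}}$ enters: for $m=2$ one has $u\in H^4$, hence $Eu\in H^2$, and I would bound the $H^2$-seminorm of the projection error directly by $|Eu-\mathcal{Q}_h Eu|_{2,T}\le C|Eu|_{2,T}\le C\|u\|_{4,T}$ (trivially when $k=2$, since $\mathcal{Q}_h Eu$ is then constant, and otherwise by the $H^2$-stability of the $L^2$ projection on shape-regular elements), which yields $\sum_T h_T^4|Eu-\mathcal{Q}_h Eu|_{2,T}^2\le Ch^4\|u\|_4^2$. Combining the two contributions gives $h^2\|u\|_3^2+h^4\|u\|_4^2\le h^2(\|u\|_3+h\|u\|_4)^2$, so that the square root reproduces the right-hand side $Ch^{m-1}(\|u\|_{m+1}+h\delta_{m,2}\|u\|_4)$, the Kronecker symbol recording precisely that the anomalous $\|u\|_4$ term is present only at the lowest order $m=2$.
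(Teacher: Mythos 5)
Your proposal is correct and follows essentially the same route as the paper: the trace inequality (\ref{trace-inequality}) on each element, the approximation estimates (\ref{3.2})--(\ref{3.2-2}) of Lemma \ref{Lemma5.2}, and the $L^2$-boundedness of $Q_b$ and $Q_g$ (via the projection identities you state) to remove the boundary projections in (\ref{3.7}) and (\ref{3.8}). Your more detailed handling of the $m=2$ case in (\ref{3.6}), where the second-order seminorm of $Eu-{\cal Q}_h(Eu)$ must be paid for with $\|u\|_4$, is exactly what the paper's terse chain of inequalities silently absorbs into the $\delta_{m,2}\|u\|_4$ term.
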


\begin{proof} To prove (\ref{3.5}), by the trace inequality (\ref{trace-inequality}) and the
estimate (\ref{3.3}), we get
\begin{equation*}
\begin{split}
&\sum_{T\in {\cal T}_h} h_T\|Eu-{\cal Q}_h(Eu)\|_{\partial T}^2\\
\leq & C\sum_{T\in {\cal T}_h}  \|Eu-{\cal Q}_h(Eu)\|_{T}^2+h_T^2|Eu-{\cal Q}_h(Eu)|_{1,T}^2 \\
\leq & Ch^{2m-2}\|u\|^2_{m+1}.
\end{split}
\end{equation*}

As to (\ref{3.6}), by the trace inequality (\ref{trace-inequality})
and the estimate (\ref{3.3}), we obtain
\begin{equation*}
\begin{split}
&\sum_{T\in {\cal
T}_h} h_T^3\|\kappa \nabla(E
 u-{\cal Q}_h(Eu)) \cdot \textbf{n}\|_{\partial T}^2\\
 \leq & C\sum_{T\in {\cal
T}_h} h_T^2\|\nabla  (Eu-{\cal
Q}_h(Eu))\|_{T}^2
+h_T^4| \nabla (Eu-{\cal Q}_h(Eu))|_{1,T}^2\Big)\\
\leq & Ch^{2m-2}\big(\|u\|^2_{m+1}+h^2\delta_{m,2}\|u\|_4^2).
\end{split}
\end{equation*}

As to (\ref{3.7}), by  the trace inequality (\ref{trace-inequality})
and the estimate (\ref{3.2}), we have

\begin{equation*}
\begin{split}
& \sum_{T\in {\cal T}_h}h_T^{-1}\| \kappa\nabla (Q_0u)\cdot \textbf{n}-Q_g(\kappa \nabla
u\cdot \textbf{n})\|_{\partial T}^2 \\
\leq& \sum_{T\in {\cal T}_h}h_T^{-1}\| \kappa\nabla (Q_0u)\cdot \textbf{n}- \kappa \nabla
u\cdot \textbf{n} \|_{\partial T}^2 \\
\leq& C\sum_{T\in {\cal T}_h}h_T^{-1}\|  \nabla (Q_0u)  -  \nabla
u  \|_{\partial T}^2 \\
\leq& C\sum_{T\in {\cal T}_h} h_T^{-2}\|\nabla Q_0u-\nabla u\|_{ T}^2+|\nabla Q_0u-\nabla u|_{1,T}^2 \\
\leq&  Ch^{2m-2}\|u\|^2_{m+1}.
\end{split}
\end{equation*}

As to (\ref{3.8}), by the trace inequality
(\ref{trace-inequality}) and the estimate (\ref{3.2}), we have
\begin{equation*}
\begin{split}
&\sum_{T\in {\cal T}_h}h_T^{-3}\| Q_0u - Q_bu\|_{\partial T}^2\\
\leq & \sum_{T\in {\cal T}_h}h_T^{-3}\| Q_0u -  u\|_{\partial T}^2\\
 \leq& C\sum_{T\in {\cal T}_h} h_T^{-4}\|Q_0u-u\|_{T}^2+h_T^{-2}\|\nabla(Q_0u-u)\|_{T}^2 \\
\leq&  Ch^{2m-2}\|u\|^2_{m+1}.
\end{split}
\end{equation*}

Finally, as to  (\ref{3.8-2}), by the trace inequality
(\ref{trace-inequality}) and the estimate (\ref{3.2-2}), we have
\begin{equation*}
\begin{split}
& \sum_{T\in {\cal T}_h} h_T^3\|(\kappa\nabla u -    {\cal Q}_1 (\kappa  \nabla u ))\cdot \bn \|_{\partial T}^2 \\
\leq & \sum_{T\in {\cal T}_h} h_T^2\| \kappa\nabla u -    {\cal Q}_1 (\kappa  \nabla u )  \|^2_{  T}+h_T^4 | \kappa\nabla u -    {\cal Q}_1 (\kappa  \nabla u )  |_{ 1, T}^2
  \\
\leq&  Ch^{2m+2}\|u\|^2_{m+1}.
\end{split}
\end{equation*}
This completes the proof of the lemma.
\end{proof}

\section{An Error Equation}\label{Section:error-equation}
Let $u$ and $u_h=\{u_0,u_b, u_g\} \in V_h$ be the solutions of
(\ref{0.1}) and (\ref{2.7}), respectively. Denote by
\begin{equation}\label{error-term}
e_h=Q_hu-u_h
\end{equation}
the error function between the $L^2$ projection of the exact
solution $u$ and its weak Galerkin finite element approximation
$u_h$. An error equation refers to some identity that the error
function $e_h$ must satisfy. The goal of this section is to derive
an error equation for $e_h$.

\begin{lemma}\label{Lemma6.1} The error function
$e_h$ as defined by (\ref{error-term}) is a finite element function
in $V_h^0$ and satisfies the following equation
\begin{equation}\label{4.1}
(E_{w} e_h,E_{w}v)_h+2\mu (\kappa \nabla_w e_h,\nabla_w v)_h+\mu^2(e_h,v)_h+s(e_h,v)=\phi_u(v),\qquad
\forall v\in V_h^0,
\end{equation}
where
\begin{equation}\label{phiu}
\begin{split}
\phi_u(v)=
 &-\sum_{T\in {\cal T}_h}\langle  \kappa  \nabla (Eu- {\cal
Q}_h(Eu)  ) \cdot \textbf{n},
 v_0 -v_b \rangle_{\partial T} \\
&+\sum_{T\in {\cal T}_h} \langle  \kappa \nabla v_0\cdot \textbf{n}-v_g, Eu-{\cal Q}_hE u\rangle_{\partial
T}\\
&+  \sum_{T\in {\cal T}_h} 2\mu  \langle v_0-v_b,   (\kappa\nabla u -    {\cal Q}_1 (\kappa  \nabla u ))\cdot \bn   \rangle_{\partial T}  +s(Q_hu,v).\end{split}
\end{equation}
\end{lemma}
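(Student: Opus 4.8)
The plan is to evaluate the left-hand bilinear form of (\ref{4.1}) with $e_h$ replaced by the projection $Q_hu$, show that it equals $(f,v_0)+\phi_u(v)$ for every $v\in V_h^0$, and then subtract the scheme (\ref{2.7}) satisfied by $u_h$. First I would record that $e_h\in V_h^0$: since $Q_hu=\{Q_0u,Q_bu,Q_g(\kappa\nabla u\cdot\bn)\}$ and the Dirichlet/Neumann data give $Q_bu=Q_b\xi=u_b$ and $Q_g(\kappa\nabla u\cdot\bn)=Q_g\nu=u_g$ on $\partial\Omega$, the boundary traces of $Q_hu-u_h$ vanish there, so $e_h$ has zero trace.

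Next I would expand each term of the form at $(Q_hu,v)$ element by element. Using the commutativity $E_{w}(Q_hu)=\mathcal{Q}_h(Eu)$ and $\nabla_{w}(Q_hu)=\mathcal{Q}_1(\nabla u)$ of Lemma \ref{Lemma5.1}, together with (\ref{A.002}) tested against $\varphi=\mathcal{Q}_h(Eu)\in P_{k-2}(T)$ and the fact that $Ev_0\in P_{k-2}(T)$ makes $\mathcal{Q}_h$ exact, I obtain on each $T$
\begin{equation*}
(E_{w}(Q_hu),E_{w}v)_T=(Ev_0,Eu)_T+\langle v_0-v_b,\kappa\nabla(\mathcal{Q}_h(Eu))\cdot\bn\rangle_{\partial T}-\langle\kappa\nabla v_0\cdot\bn-v_g,\mathcal{Q}_h(Eu)\rangle_{\partial T}.
\end{equation*}
Likewise, applying (\ref{2.4-3}) with $\boldsymbol{\psi}=\mathcal{Q}_1(\kappa\nabla u)=\kappa\mathcal{Q}_1(\nabla u)$ (valid because $\kappa$ is piecewise constant) and exactness of $\mathcal{Q}_1$ on $\nabla v_0\in[P_{k-1}(T)]^d$,
\begin{equation*}
(\kappa\nabla_{w}(Q_hu),\nabla_{w}v)_T=(\kappa\nabla v_0,\nabla u)_T-\langle v_0-v_b,\mathcal{Q}_1(\kappa\nabla u)\cdot\bn\rangle_{\partial T},
\end{equation*}
while $\mu^2(Q_hu,v)_h=\mu^2(u,v_0)$ since $v_0\in P_k(T)$. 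Integrating the interior products $(Ev_0,Eu)_T$ and $(\kappa\nabla v_0,\nabla u)_T$ by parts twice and once respectively moves all derivatives onto $u$, producing the volume contributions $(E^2u,v_0)_T$ and $-2\mu(Eu,v_0)_T$; combined with $\mu^2(u,v_0)$ these sum, through the strong form $f=E^2u-2\mu Eu+\mu^2u$ obtained by expanding $(-E+\mu)^2$, to $(f,v_0)$.

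What remains are the boundary integrals. Matching them against the three boundary terms of $\phi_u$ in (\ref{phiu}) is a purely algebraic regrouping on each $\partial T$; after cancellation it leaves exactly three residual edge sums,
\begin{equation*}
\sum_{T}\langle v_g,Eu\rangle_{\partial T},\qquad \sum_{T}\langle\kappa\nabla(Eu)\cdot\bn,v_b\rangle_{\partial T},\qquad \sum_{T}\langle\kappa\nabla u\cdot\bn,v_b\rangle_{\partial T},
\end{equation*}
which I must show all vanish. This is the crux of the argument. On each interior edge the exact quantities $Eu$, $\kappa\nabla(Eu)$ and $\kappa\nabla u$ are single-valued (the regularity $u\in H^{\max\{m+1,4\}}$ guarantees enough smoothness), so $\kappa\nabla(Eu)\cdot\bn$ and $\kappa\nabla u\cdot\bn$ flip sign between the two elements sharing the edge; since $v_b$ is single-valued and $v_g$ obeys the conformity condition $v_g^L+v_g^R=0$, each interior edge contributes zero. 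On boundary edges the homogeneous traces $v_b=0$, $v_g=0$ of $v\in V_h^0$ kill the remaining terms. Hence these sums drop out and the boundary integrals reduce precisely to the first three terms of $\phi_u(v)$.

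Adding the stabilizer $s(Q_hu,v)$, which appears identically on both sides, then yields
\begin{equation*}
(E_{w}(Q_hu),E_{w}v)_h+2\mu(\kappa\nabla_{w}(Q_hu),\nabla_{w}v)_h+\mu^2(Q_hu,v)_h+s(Q_hu,v)=(f,v_0)+\phi_u(v).
\end{equation*}
Subtracting (\ref{2.7}), which reads identically with $u_h$ in place of $Q_hu$ and right-hand side $(f,v_0)$, and using linearity with $e_h=Q_hu-u_h$, delivers (\ref{4.1}). I expect the only genuinely delicate step to be the vanishing of the three residual edge sums: it is where the single-valuedness of $v_b$, the condition $v_g^L+v_g^R=0$, the smoothness of $u$, and the homogeneous boundary conditions built into $V_h^0$ must all be used at once, whereas the rest is systematic integration by parts together with the projection identities of Lemma \ref{Lemma5.1}.
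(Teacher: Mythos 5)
Your proposal is correct and follows essentially the same route as the paper's proof: expand the bilinear form at $Q_hu$ via the commutativity identities of Lemma \ref{Lemma5.1} and the representations (\ref{A.002}), (\ref{2.4-3}), integrate by parts to reach the strong form $E^2u-2\mu Eu+\mu^2u=f$, cancel the residual edge sums involving $v_b$ and $v_g$ using single-valuedness of the exact fluxes and the conditions $v_g^L+v_g^R=0$, $v_b=v_g=0$ on $\partial\Omega$, then add $s(Q_hu,v)$ and subtract the scheme (\ref{2.7}). The only difference is cosmetic: you make explicit the cancellation argument that the paper dispatches with a parenthetical remark, which is where the hypotheses on $V_h^0$ and the regularity of $u$ genuinely enter.
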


\begin{proof} Using (\ref{A.002}) with
$\varphi=E_w(Q_h u )$, from (\ref{l}),  we obtain
\begin{equation*}
\begin{split}
(E_wv,E_{ w}(Q_hu))_T=&(Ev_0,{\cal
Q}_h(Eu))_T+\langle v_0-v_b, \kappa\nabla ({\cal
Q}_h(Eu))
\cdot \textbf{n} \rangle_{\partial T}\\
&-\langle\kappa \nabla v_0\cdot \textbf{n}-v_{g},{\cal Q}_hE u\rangle_{\partial T}\\
=&(Ev_0, Eu)_T+\langle
v_0-v_b,\kappa \nabla ({\cal Q}_h(Eu))\cdot
\textbf{n} \rangle_{\partial T}\\
&-\langle\kappa \nabla v_0\cdot
\textbf{n}  -v_{g },{\cal Q}_hE
u\rangle_{\partial T},
\end{split}
\end{equation*}
which implies that
\begin{equation}\label{4.2}
\begin{split}
(Ev_0, Eu)_T=&
(E_{ w}(Q_hu),E_{ w}v)_T-\langle v_0-v_b,
\kappa \nabla({\cal Q}_h(Eu))\cdot \textbf{n} \rangle_{\partial T}\\
&+\langle \kappa \nabla v_0\cdot
\textbf{n}-v_{g },{\cal Q}_hE
u\rangle_{\partial T}.
\end{split}
\end{equation}
 
 Next, it follows from the
integration by parts that
\begin{equation}\label{part1}
 \begin{split}
\sum_{T\in {\cal T}_h}(Eu, Ev_0)_T=&
\sum_{T\in {\cal T}_h} (E^2u,v_0)_T-\langle  \kappa \nabla (Eu) \cdot \textbf{n},
 v_0 \rangle_{\partial T}
+\langle  \kappa \nabla v_0\cdot \textbf{n}, Eu\rangle_{\partial
T} .
 \end{split}
\end{equation}

  Using  (\ref{2.4-3}) with  $\boldsymbol{\psi}=\nabla_w(Q_hu)$, from  (\ref{l-2})  and  the  integration by parts, we have
\begin{equation}\label{part2}
 \begin{split}
& \sum_{T\in {\cal T}_h} 2\mu (\kappa \nabla_w(Q_hu), \nabla_w v)_T\\
 =&  \sum_{T\in {\cal T}_h}2\mu (\kappa\nabla v_0,{\cal Q}_1  \nabla u )_T -2\mu \langle v_0-v_b , {\cal Q}_1 \kappa \nabla u \cdot \bn\rangle_{\partial T}\\
 =&  \sum_{T\in {\cal T}_h}2\mu (\nabla v_0, \kappa \nabla u )_T -2\mu  \langle v_0-v_b , {\cal Q}_1 \kappa\nabla u \cdot \bn\rangle_{\partial T}\\
=&  \sum_{T\in {\cal T}_h}-2\mu (v_0, \nabla \cdot( \kappa \nabla u ))_T + 2\mu  \langle  v_0,  \kappa \nabla u \cdot  \bn  \rangle_{\partial T} -2\mu \langle v_0-v_b , {\cal Q}_1 \kappa  
\nabla u \cdot \bn\rangle_{\partial T}\\
=& \sum_{T\in {\cal T}_h} -2\mu (v_0,E u)_T + 2\mu  \langle v_0,   (\kappa\nabla u -    {\cal Q}_1 (\kappa  \nabla u ))\cdot \bn   \rangle_{\partial T}  ,
 \end{split}
\end{equation}
where we have used the fact that the sum for the terms associated
with $v_{b}$   vanishes  (note that 
$v_{b}$  vanishes on $\partial T\cap\partial\Omega$).

From the definition of the projection, we get
\begin{equation}\label{part3}
\sum_{T\in {\cal T}_h}  \mu^2 (Q_hu,v)_T=\sum_{T\in {\cal T}_h}  \mu^2 ( u_0,v_0)_T.
\end{equation}

Adding (\ref{part1})-(\ref{part3}) together and  using the identity that 
$$
\sum_{T\in {\cal T}_h} (E^2
u,v_0)_T-2\mu (Eu,v_0)_T+\mu^2(u_0,v_0)_T = (f,v_0),
 $$ we obtain
\begin{equation*}
\begin{split}
&\sum_{T\in {\cal T}_h} (Eu, Ev_0)_T+ 2\mu \kappa (\nabla_w(Q_hu), \nabla_w v)_T+\mu^2 (Q_hu,v)_T  \\
=& (f,v_0)-\sum_{T\in {\cal T}_h}\Big( \langle  \kappa \nabla (Eu) \cdot \textbf{n},
 v_0 -v_b \rangle_{\partial T} +  \langle  \kappa \nabla v_0\cdot \textbf{n}-v_g, Eu\rangle_{\partial
T}\\
&+  2\mu  \langle v_0-v_b,   (\kappa\nabla u -    {\cal Q}_1 (\kappa  \nabla u ))\cdot \bn   \rangle_{\partial T}\Big),
\end{split}
\end{equation*}
where we have used the fact that the sum for the terms associated
with $v_{b}$ and $v_g  $ vanishes  (note that both
$v_{b}$ and $v_g$ vanish on $\partial T\cap\partial\Omega$). Combining the
above equation with (\ref{4.2}) and adding $s(Q_hu,v)$ to both sides of the   equation  yield 
\begin{equation*}\label{4.3}
\begin{split}
&\sum_{T\in {\cal T}_h}(E_{ w}(Q_hu),E_{ w}v)_T+ 2\mu \kappa (\nabla_w(Q_hu), \nabla_w v)_T+\mu^2 (Q_hu,v)_T \\
&+s(Q_hu,v)\\
%=& (f,v_0)-\sum_{T\in {\cal T}_h}\Big(\langle  \kappa \nabla (Eu) \cdot \textbf{n},
 %v_0 -v_b \rangle_{\partial T} +   \langle  \kappa \nabla v_0\cdot \textbf{n}-v_g, Eu\rangle_{\partial
%T}\\
%&+  2\mu  \langle v_0-v_b,   (\kappa\nabla u -    {\cal Q}_1 (\kappa  \nabla u ))\cdot \bn   \rangle_{\partial T} \\
% &+\langle v_0-v_b, \kappa\nabla ({\cal
%Q}_h(Eu))
%\cdot \textbf{n} \rangle_{\partial T} -\langle\kappa \nabla v_0\cdot \textbf{n}-v_{g},{\cal Q}_hE u\rangle_{\partial T}\Big)\\
=& (f,v_0)-\sum_{T\in {\cal T}_h}\Big( \langle  \kappa  \nabla (Eu- {\cal
Q}_h(Eu)  ) \cdot \textbf{n},
 v_0 -v_b \rangle_{\partial T} \\
&+ \langle  \kappa \nabla v_0\cdot \textbf{n}-v_g, Eu-{\cal Q}_hE u\rangle_{\partial
T}\\
&+  2\mu  \langle v_0-v_b,   (\kappa\nabla u -    {\cal Q}_1 (\kappa  \nabla u ))\cdot \bn   \rangle_{\partial T}\Big) +s(Q_hu,v),
\end{split}
\end{equation*}
%Adding $s(Q_hu,v)$ to both sides of the above equation gives
%\begin{equation}
%\begin{split}
 %&\sum_{T\in {\cal T}_h}  (E_{ w}(Q_hu),E_{ w}v)_T+ 2\mu \kappa (\nabla_w(Q_hu), \nabla_w v)_T+\mu^2 (Q_hu,v)_T \\
%&+s(Q_hu,v)\\
%=& (f,v_0)-\sum_{T\in {\cal T}_h}\Big( \langle  \kappa  \nabla (Eu- {\cal
%Q}_h(Eu)  ) \cdot \textbf{n},
 %v_0 -v_b \rangle_{\partial T} \\
%&+  \langle  \kappa \nabla v_0\cdot \textbf{n}-v_g, Eu-{\cal Q}_hE u\rangle_{\partial T}\\
%&+  2\mu  \langle v_0-v_b,   (\kappa\nabla u -    {\cal Q}_1 (\kappa  \nabla u ))\cdot \bn   \rangle_{\partial T} \Big) +s(Q_hu,v).
%\end{split}
%\end{equation}
which, subtracting (\ref{2.7}),   completes the proof.
\end{proof}

\section{Error Estimates in $H^2$}\label{Section:H2error}
The goal of this section is to derive some error estimates for the
solution of weak Galerkin algorithm (\ref{2.7}). 
The following result is an estimate for the error function $e_h$ in
the trip-bar norm which is essentially an $H^2$-equivalent norm in
$V_h^0$.

\begin{theorem}\label{Theorem6.6} Let $u_h\in V_h$ be the weak Galerkin finite
element solution arising from (\ref{2.7}) with finite elements of
order $k\geq 2$. Assume that the exact solution $u$ of (\ref{0.1})
is sufficiently regular such that $u\in H^{\max\{k+1,4\}}(\Omega)$.
Then, there exists a constant $C$ such that
\begin{equation}\label{4}
\3baru_h-Q_hu\3bar\leq
Ch^{k-1}\Big(\|u\|_{k+1}+h\delta_{k,2}\|u\|_4\Big).
\end{equation}
In other words, we have an optimal order of convergence in the $H^2$-equivalent  
norm.
\end{theorem}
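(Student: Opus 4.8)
The plan is to derive the estimate directly from the error equation of Lemma~\ref{Lemma6.1} together with the coercivity encoded in the triple-bar norm. Since $e_h=Q_hu-u_h$ lies in $V_h^0$ by Lemma~\ref{Lemma6.1}, I would take $v=e_h$ in the error equation (\ref{4.1}); the left-hand side is then exactly $\3bare_h\3bar^2$ as defined in (\ref{3barnorm}), so one obtains the identity $\3bare_h\3bar^2=\phi_u(e_h)$. The whole theorem reduces to showing
\[
|\phi_u(e_h)|\le Ch^{k-1}\big(\|u\|_{k+1}+h\delta_{k,2}\|u\|_4\big)\,\3bare_h\3bar,
\]
after which cancelling one factor of $\3bare_h\3bar$ gives (\ref{4}).

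To bound $\phi_u$ from (\ref{phiu}), the guiding principle is that every boundary term should be split by the Cauchy--Schwarz inequality into a factor absorbed by the stabilizer (bounded by $s(e_h,e_h)^{1/2}\le\3bare_h\3bar$) and a factor that is a projection error estimated in Lemma~\ref{Lemma5.3}. The two distinct powers $h_T^{-1}$ and $h_T^{-3}$ appearing in $s(\cdot,\cdot)$ are precisely what dictate the weighting. For the first term I would insert $h_T^{3/2}h_T^{-3/2}$, pairing $\big(\sum_T h_T^{-3}\|v_0-v_b\|_{\partial T}^2\big)^{1/2}\le\3bare_h\3bar$ against the factor controlled by (\ref{3.6}) with $m=k$; this is the only place where the $\delta_{k,2}\|u\|_4$ correction enters. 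For the second term I would insert $h_T^{-1/2}h_T^{1/2}$, so that $\big(\sum_T h_T^{-1}\|\kappa\nabla v_0\cdot\bn-v_g\|_{\partial T}^2\big)^{1/2}\le\3bare_h\3bar$ meets (\ref{3.5}). For the third term, treating $2\mu$ as a constant, I would pair $\big(\sum_T h_T^{-3}\|v_0-v_b\|_{\partial T}^2\big)^{1/2}\le\3bare_h\3bar$ against (\ref{3.8-2}); here the estimate is actually of higher order $h^{k+1}$ and therefore harmless.

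The last term $s(Q_hu,e_h)$ I would treat by the Cauchy--Schwarz inequality for the stabilizer itself, namely $|s(Q_hu,e_h)|\le s(Q_hu,Q_hu)^{1/2}\,\3bare_h\3bar$, and bound its two constituent sums by (\ref{3.7}) and (\ref{3.8}), both of order $h^{k-1}\|u\|_{k+1}$. Collecting the four contributions yields the displayed bound on $\phi_u(e_h)$. I expect the only real (if routine) difficulty to be the bookkeeping of the $h_T$ weights: each Cauchy--Schwarz split must match the exact scaling of the corresponding estimate in Lemma~\ref{Lemma5.3} so that one factor reconstitutes part of $\3bare_h\3bar$ and the surviving factor carries the optimal power $h^{k-1}$. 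The uniform choice $m=k$, together with the $\delta_{k,2}$ term in (\ref{3.6}), is exactly what produces the stated optimal order of convergence.
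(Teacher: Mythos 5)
Your proposal is correct and follows essentially the same route as the paper's proof: set $v=e_h$ in the error equation to get $\3bar e_h\3bar^2=\phi_u(e_h)$, then bound each term of $\phi_u(e_h)$ by Cauchy--Schwarz with exactly the $h_T^{\pm 3/2}$ and $h_T^{\pm 1/2}$ weight splittings you describe, invoking (\ref{3.6}), (\ref{3.5}), (\ref{3.8-2}), (\ref{3.7}) and (\ref{3.8}) with $m=k$. The only cosmetic difference is that the paper expands $s(Q_hu,e_h)$ into its two constituent sums and applies Cauchy--Schwarz to each, whereas you apply the bilinear-form Cauchy--Schwarz inequality $|s(Q_hu,e_h)|\le s(Q_hu,Q_hu)^{1/2}s(e_h,e_h)^{1/2}$ directly; these are equivalent.
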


\begin{proof} Letting  $v=e_h$ in the error equation (\ref{4.1}) gives rise to
\begin{equation}\label{4.4}
\begin{split}
\3bar e_h\3bar^2=&-\sum_{T\in {\cal T}_h}\langle  \kappa  \nabla (Eu- {\cal
Q}_h(Eu)  ) \cdot \textbf{n},
 e_0 -e_b \rangle_{\partial T} \\
&+\sum_{T\in {\cal T}_h} \langle  \kappa \nabla e_0\cdot \textbf{n}-e_g, Eu-{\cal Q}_hE u\rangle_{\partial
T}\\
&+ \sum_{T\in {\cal T}_h} 2\mu  \langle e_0-e_b,   (\kappa\nabla u -    {\cal Q}_1 (\kappa  \nabla u ))\cdot \bn   \rangle_{\partial T} \\
&+\sum_{T\in {\cal T}_h}  h_T^{-1}\langle  \kappa \nabla
Q_0u \cdot \textbf{n}- Q_g( \kappa \nabla u \cdot \textbf{n}), \kappa \nabla
e_0 \cdot \textbf{n}-
e_g\rangle_{\partial T}\\
&+\sum_{T\in {\cal T}_h} h_T^{-3}\langle Q_0u-Q_bu,  
e_0-e_b\rangle_{\partial T}.
\end{split}
\end{equation}

Each  term on the right-hand side of (\ref{4.4}) can be estimated as follows. For the first term of the right-hand side of (\ref{4.4}), using the Cauchy-Schwarz inequality, and the estimate (\ref{3.6}), one arrives at
\begin{equation*}\label{4.5}
\begin{split}
 &\Big|\sum_{T\in {\cal T}_h}\langle  \kappa \nabla (Eu-{\cal Q}_h Eu) \cdot \textbf{n},
 e_0-e_b \rangle_{\partial T}\Big|\\
 \leq&\Big(\sum_{T\in {\cal T}_h}h_T^3\| \kappa \nabla (Eu-{\cal Q}_h Eu) \cdot \textbf{n}\|^2_{\partial T}\Big)^{\frac{1}{2}}
\Big(\sum_{T\in {\cal T}_h}h_T^{-3}\| e_0-e_b \|^2_{\partial T}\Big)^{\frac{1}{2}}\\
  \leq& Ch^{k-1}(\|u\|_{k+1}+h\delta_{k,2}\|u\|_4)\3bar e_h\3bar.
\end{split}
\end{equation*}
For the second term of the right-hand side of (\ref{4.4}), using the Cauchy-Schwarz inequality, and 
the estimate  (\ref{3.5}), one arrives at
\begin{equation*}\label{4.6}
\begin{split}
 &\Big|\sum_{T\in {\cal T}_h}\langle  \kappa \nabla e_0\cdot \textbf{n}-e_g, Eu-{\cal Q}_h Eu\rangle_{\partial
T}\Big|\\
 \leq&\Big(\sum_{T\in {\cal T}_h}h_T^{-1}\|\kappa \nabla e_0\cdot \textbf{n}-e_g\|^2_{\partial T}\Big)^{\frac{1}{2}}
\Big(\sum_{T\in {\cal T}_h}h_T \|Eu-{\cal Q}_h Eu\|^2_{\partial T}\Big)^{\frac{1}{2}}\\
  \leq& Ch^{k-1} \|u\|_{k+1} \3bar e_h\3bar.
\end{split}
\end{equation*}
For the third term of the right-hand side of (\ref{4.4}), using the Cauchy-Schwarz inequality, and the estimate (\ref{3.8-2}), one arrives at
\begin{equation*}\label{4.6-2}
\begin{split}
 &\Big|\sum_{T\in {\cal T}_h} 2\mu  \langle e_0-e_b,   (\kappa\nabla u -    {\cal Q}_1 (\kappa  \nabla u ))\cdot \bn   \rangle_{\partial T}
\Big|\\
 \leq&\Big(\sum_{T\in {\cal T}_h}h_T^{-3}\|e_0-e_b\|^2_{\partial T}\Big)^{\frac{1}{2}}
\Big(\sum_{T\in {\cal T}_h}h_T ^3\| (\kappa\nabla u -    {\cal Q}_1 (\kappa  \nabla u ))\cdot \bn  \|^2_{\partial T}\Big)^{\frac{1}{2}}\\
  \leq& Ch^{k+1} \|u\|_{k+1} \3bar e_h\3bar.
\end{split}
\end{equation*}
For the fourth term of the right-hand side of (\ref{4.4}), using the Cauchy-Schwarz inequality, and the estimate  (\ref{3.7}), one arrives at
\begin{equation*}\label{4.7}
\begin{split}
 &\Big|\sum_{T\in {\cal T}_h} h_T^{-1}\langle  \kappa \nabla
Q_0u \cdot \textbf{n}- Q_g( \kappa \nabla u \cdot \textbf{n}), \kappa \nabla
e_0 \cdot \textbf{n}-
e_g\rangle_{\partial T}\Big|\\
   \leq& Ch^{k-1} \|u\|_{k+1} \3bar e_h\3bar.
\end{split}
\end{equation*}
For the last term of the right-hand side of (\ref{4.4}), using the Cauchy-Schwarz inequality, and the estimate  (\ref{3.8}), one arrives at
\begin{equation*}\label{4.8}
\begin{split}
 &\Big|\sum_{T\in {\cal T}_h} h_T^{-3}\langle Q_0u-Q_bu,  
e_0-e_b\rangle_{\partial T} \Big|\\
    \leq& Ch^{k-1} \|u\|_{k+1} \3bar e_h\3bar.
\end{split}
\end{equation*}

Substituting  all the above estimates  into (\ref{4.4}) yields
$$
\3bar e_h\3bar^2\leq Ch^{k-1}(\|u\|_{k+1}+h\delta_{k,2}\|u\|_4)\3bar e_h\3bar,
$$  
which implies (\ref{4}). This completes the proof of the theorem.
\end{proof}

\section{Error Estimates in $L^2$}\label{Section:L2error}
This section shall establish the estimates for the  components $e_0$, $e_b$ and $e_g$ of
the error function $e_h$ in the standard $L^2$ norm, respectively. To this end, we
consider the following dual problem:
\begin{equation}\label{5.1}
\begin{split}
F^2\w&=e_0, \qquad \text{in}\ \Omega,\\
\w&=0, \qquad \text{on}\ \partial\Omega,\\
 \kappa \nabla \w\cdot \textbf{n}&=0,  \qquad \text{on}\
\partial\Omega.
\end{split}
\end{equation}
Assume the above dual problem has the following regularity estimate
\begin{equation}\label{5.2}
\|\w\|_4\leq C\|e_0\|.
\end{equation}

\begin{theorem}\label{Theorem7.3} Let $u_h\in V_h$ be the solution of
the weak Galerkin algorithm (\ref{2.7}) with finite elements of
order $k\geq 2$. Let $t_0=\min\{k,3\}$. Assume that the exact
solution of (\ref{0.1}) is sufficiently regular such that  $u\in H^{\max\{k+1,4\}}(\Omega)$, and
the dual problem (\ref{5.1}) has the $H^4$ regularity. Then, there
exists a constant $C$ such that
\begin{equation}\label{5.3}
\|Q_0u-u_0\|\leq
Ch^{k+t_0-2}\Big(\|u\|_{k+1}+h\delta_{k,2}\|u\|_{4}\Big).
\end{equation}
In other words, we have a sub-optimal order of convergence for $k=2$
and optimal order of convergence for $k\geq 3$.
\end{theorem}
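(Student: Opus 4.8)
The plan is to run a duality (Aubin--Nitsche) argument based on the dual problem (\ref{5.1}). Throughout, write $\mathcal{A}(w,v)$ for the symmetric bilinear form on the left-hand side of (\ref{2.7}), so that the scheme reads $\mathcal{A}(u_h,v)=(f,v_0)$ and the error equation (\ref{4.1}) of Lemma~\ref{Lemma6.1} reads $\mathcal{A}(e_h,v)=\phi_u(v)$ for all $v\in V_h^0$. Since $F^2\w=e_0$ and $e_0$ is the interior component of $e_h$, I would start from
\begin{equation*}
\|e_0\|^2=(e_0,e_0)=(F^2\w,e_0).
\end{equation*}

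The crux is a companion identity for the dual solution $\w$. The derivation of Lemma~\ref{Lemma6.1} uses that $u$ solves (\ref{0.1}) only through the single step $F^2u=f$; since the dual regularity (\ref{5.2}) gives $\w\in H^4(\Omega)$, rerunning that derivation verbatim with $\w$ in place of $u$ and $F^2\w$ in place of $f$ produces, for all $v\in V_h^0$,
\begin{equation*}
\mathcal{A}(Q_h\w,v)=(F^2\w,v_0)+\phi_\w(v)=(e_0,v_0)+\phi_\w(v),
\end{equation*}
where $\phi_\w$ is the functional (\ref{phiu}) with $u$ replaced by $\w$. Because $\w=0$ and $\kappa\nabla\w\cdot\bn=0$ on $\partial\Omega$, the boundary traces of $Q_h\w$ vanish there, so $Q_h\w\in V_h^0$ and is admissible as a test function in (\ref{4.1}). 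Taking $v=e_h$ in the identity above, taking $v=Q_h\w$ in (\ref{4.1}), and using the symmetry $\mathcal{A}(Q_h\w,e_h)=\mathcal{A}(e_h,Q_h\w)$, I obtain the key identity
\begin{equation*}
\|e_0\|^2=\phi_u(Q_h\w)-\phi_\w(e_h).
\end{equation*}

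It then remains to estimate the two functionals, each of which is a sum of boundary pairings handled by Cauchy--Schwarz exactly as in the proof of Theorem~\ref{Theorem6.6}, so that one factor carries the $u$-data and the other the $\w$-data. For the $u$-data I would use Lemma~\ref{Lemma5.3} at the top order $m=k$ (from (\ref{3.5})--(\ref{3.8-2})) and the energy bound $\3bar e_h\3bar\le Ch^{k-1}(\|u\|_{k+1}+h\delta_{k,2}\|u\|_4)$ of Theorem~\ref{Theorem6.6} wherever the components of $e_h$ appear; the traces $\big(\sum_T h_T^{-3}\|e_0-e_b\|_{\partial T}^2\big)^{1/2}$ and $\big(\sum_T h_T^{-1}\|\kappa\nabla e_0\cdot\bn-e_g\|_{\partial T}^2\big)^{1/2}$ are bounded by $\3bar e_h\3bar$ since they are controlled by $s(e_h,e_h)$. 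For the $\w$-data the decisive point is that $\w$ is controlled only in $H^4$ through (\ref{5.2}); hence Lemma~\ref{Lemma5.3} applies to $\w$ only up to $m=t_0=\min\{k,3\}$, producing factors of order $h^{t_0-1}\|\w\|_{t_0+1}\le Ch^{t_0-1}\|e_0\|$ from (\ref{3.5}),(\ref{3.6}),(\ref{3.7}),(\ref{3.8}) and of order $h^{t_0+1}\|e_0\|$ from (\ref{3.8-2}). Pairing a $u$-factor of order $h^{k-1}$ with a $\w$-factor of order $h^{t_0-1}$ gives the common bound $Ch^{k+t_0-2}(\|u\|_{k+1}+h\delta_{k,2}\|u\|_4)\|e_0\|$ for every term, the contributions involving (\ref{3.8-2}) being of higher order $h^{k+t_0}$. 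Collecting these estimates yields
\begin{equation*}
\|e_0\|^2\le Ch^{k+t_0-2}\big(\|u\|_{k+1}+h\delta_{k,2}\|u\|_4\big)\|e_0\|,
\end{equation*}
and dividing by $\|e_0\|$ gives (\ref{5.3}).

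The main obstacle is not the estimation, which is routine once the identity $\|e_0\|^2=\phi_u(Q_h\w)-\phi_\w(e_h)$ is in hand, but rather setting up that identity: one has to recognize that Lemma~\ref{Lemma6.1} actually delivers, for any sufficiently regular $w$, the relation $\mathcal{A}(Q_hw,v)=(F^2w,v_0)+\phi_w(v)$, and then combine the symmetry of $\mathcal{A}$ with the membership $Q_h\w\in V_h^0$. The second delicate point is the bookkeeping of the convergence order: the dual solution is only $H^4$-regular, so the $\w$-factors saturate at $h^{t_0-1}$ with $t_0=\min\{k,3\}$, which is exactly what produces the order $h^{k+t_0-2}$ and the loss of optimality at $k=2$ (where $t_0=2$ forces $h^{2}$ rather than the optimal $h^{3}$).
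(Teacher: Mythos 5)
Your proposal is correct, and it reaches the same duality identity as the paper, but by a cleaner, more structural route. The paper also runs an Aubin--Nitsche argument on the dual problem (\ref{5.1}): it tests $F^2\w$ against $e_0$, integrates by parts elementwise, inserts the identity (\ref{4.2}) with $\w,e_h$ in place of $u,v$, then invokes the error equation (\ref{4.1}) at $v=Q_h\w$, and finally converts the weak-gradient contribution through (\ref{2.4-2}), (\ref{l-2}) and the dual boundary condition. This hands-on expansion produces the long identity (5.4), whose $\mu$-contribution appears as a \emph{volume} term $\sum_{T}(\mu^2(\w-Q_0\w)-2\mu\nabla\cdot(\kappa\nabla\w-{\cal Q}_1(\kappa\nabla\w)),e_0)_T$; the paper bounds that term by $Ch\|e_0\|^2$ and therefore ends with $(1-Ch)\|e_0\|^2\leq\cdots$, i.e.\ it needs an absorption step (implicitly $h$ small). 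You instead observe that the first part of the proof of Lemma \ref{Lemma6.1} establishes the consistency relation $\mathcal{A}(Q_hw,v)=(F^2w,v_0)+\phi_w(v)$ for any sufficiently regular $w$, apply it to both $u$ and $\w$, and exploit the symmetry of $\mathcal{A}$ together with $Q_h\w\in V_h^0$ to obtain $\|e_0\|^2=\phi_u(Q_h\w)-\phi_\w(e_h)$ in one stroke. The two identities agree: your $\phi_\w(e_h)$ carries the $\mu$-contribution as the boundary term $\sum_T 2\mu\langle e_0-e_b,(\kappa\nabla\w-{\cal Q}_1(\kappa\nabla\w))\cdot\bn\rangle_{\partial T}$, which differs from the paper's volume term exactly by quantities the paper itself declares to vanish (and the paper's $\mu^2$ piece is identically zero by $L^2$-orthogonality, since $e_0\in P_k(T)$). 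What your packaging buys is twofold: the integration-by-parts bookkeeping is done once (inside Lemma \ref{Lemma6.1}) rather than redone, and every $\w$-term is controlled by $\3bar e_h\3bar$ via the stabilizer rather than by $\|e_0\|$, so no $(1-Ch)$ absorption or smallness assumption on $h$ is needed. The subsequent estimation (Lemma \ref{Lemma5.3} with $m=k$ for $u$ and $m=t_0$ for $\w$, Theorem \ref{Theorem6.6}, and the regularity (\ref{5.2})) and the resulting rate $h^{k+t_0-2}$ are identical to the paper's. Both routes inherit the same delicate cancellation claims for sums like $\sum_T\langle v_b,{\cal Q}_1(\kappa\nabla u)\cdot\bn\rangle_{\partial T}$ across interior edges, since both rest on Lemma \ref{Lemma6.1}.
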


\begin{proof} By testing the first equation of (\ref{5.1}) against the error function
$e_0$ on each element and using the integration by parts, we obtain
\begin{equation*}
\begin{split}
\|e_0\|^2=&(F^2 \w,e_0)\\
=&\sum_{T\in {\cal
T}_h} \Big\{(E\w,Ee_0)_T-\langle E\w,
\kappa \nabla e_0\cdot \textbf{n} \rangle_{\partial T}+\langle  \kappa \nabla (E\w)\cdot \textbf{n},e_0\rangle_{\partial T}\\
&+(-2\mu E\w+\mu^2\w, e_0)_T\Big\}\\
=&\sum_{T\in {\cal
T}_h} \Big\{(E\w,Ee_0)_T-\langle E\w,
\kappa \nabla e_0\cdot \textbf{n} -e_g\rangle_{\partial T}+\langle  \kappa \nabla (E\w)\cdot \textbf{n},e_0-e_b\rangle_{\partial T}\\
&+(-2\mu E\w+\mu^2\w, e_0)_T\Big\},
\end{split}
\end{equation*}
where the added terms associated with $e_b$ and $e_{g}$ vanish due
to the cancelation for interior edges and the fact that $e_b$ and
$e_{g}$ have zero value on $\partial\Omega$. Using (\ref{4.2}) with
$\w$ and $e_h$ in the place of $u$ and $v$ respectively, we arrive
at
\begin{equation}\label{09-100}
\begin{split}
\|e_0\|^2 =& 
\sum_{T\in {\cal
T}_h} \Big\{ (E_{ w}(Q_h\w),E_{ w}e_h)_T-\langle e_0-e_b,
\kappa \nabla({\cal Q}_h(E\w))\cdot \textbf{n} \rangle_{\partial T}\\
&+\langle \kappa \nabla e_0\cdot
\textbf{n}-e_{g },{\cal Q}_hE
\w\rangle_{\partial T} \\
&-\langle E\w,
\kappa \nabla e_0\cdot \textbf{n} -e_g\rangle_{\partial T}+\langle  \kappa \nabla (E\w)\cdot \textbf{n},e_0-e_b\rangle_{\partial T}\\
&+(-2\mu E\w+\mu^2\w, e_0)_T\Big\}\\
=&\sum_{T\in {\cal
T}_h} \Big\{ (E_{ w}(Q_h\w),E_{ w}e_h)_T-\langle e_0-e_b,
\kappa \nabla({\cal Q}_h(E\w)-E\w)\cdot \textbf{n}  \rangle_{\partial T}\\
&+\langle \kappa \nabla e_0\cdot
\textbf{n}-e_{g },{\cal Q}_hE
\w- E\w\rangle_{\partial T}  +(-2\mu E\w+\mu^2\w, e_0)_T\Big\}.
\end{split}
\end{equation}

Next, it follows from the error equation (\ref{4.1})  with $v=Q_h\w$ that
\begin{equation}\label{09-101}
\begin{split}
&(E_{w} e_h,E_{w}Q_h\w)_h\\
=&-2\mu (\kappa \nabla_w e_h,\nabla_w Q_h\w)_h-\mu^2(e_h,Q_h\w)_h-s(e_h,Q_h\w)+\phi_u(Q_h\w).
\end{split}
\end{equation}

Substituting (\ref{09-101}) into (\ref{09-100}), combining by (\ref{2.4-2}) with $\boldsymbol{\psi}=\nabla_w Q_h\w$,
 and from  (\ref{l-2}), we obtain
\begin{equation}\label{5.4}
\begin{split}
& \|e_0\|^2 \\
=&-2\mu (\kappa \nabla_w e_h,\nabla_w Q_h\w)_h-\mu^2(e_h,Q_h\w)_h-s(e_h,Q_h\w) \\
&-\sum_{T\in {\cal T}_h}\Big\{\langle  \kappa  \nabla (Eu- {\cal
Q}_h(Eu)  ) \cdot \textbf{n},
 Q_0\w-Q_b\w \rangle_{\partial T} \\
&+  \langle  \kappa \nabla Q_0\w\cdot \textbf{n}-Q_g(\kappa \nabla \w \cdot \bn), Eu-{\cal Q}_hE u\rangle_{\partial
T}\\
&+  2\mu  \langle Q_0\w-Q_b\w,   (\kappa\nabla u -    {\cal Q}_1 (\kappa  \nabla u ))\cdot \bn   \rangle_{\partial T}  \\
&-\langle e_0-e_b,
\kappa \nabla({\cal Q}_h(E\w)-E\w)\cdot \textbf{n}  \rangle_{\partial T}\\
&+\langle \kappa \nabla e_0\cdot
\textbf{n}-e_{g },{\cal Q}_hE
\w- E\w\rangle_{\partial T}  +(-2\mu E\w+\mu^2\w, e_0)_T\Big\}+s(Q_hu,Q_h\w)\\
=&\sum_{T\in {\cal T}_h}\Big\{ ( \mu^2(\w-Q_0\w)-2\mu \nabla \cdot (\kappa \nabla \w -{\cal Q}_1 (\kappa \nabla \w)  ), e_0)_T \\
&-2\mu \kappa \langle e_b, {\cal Q}_1 (\nabla \w) \cdot \bn \rangle_{\partial T} - \langle  \kappa  \nabla (Eu- {\cal
Q}_h(Eu)  ) \cdot \textbf{n},
 Q_0\w-Q_b\w \rangle_{\partial T} \\
&+  \langle  \kappa \nabla Q_0\w\cdot \textbf{n}-Q_g(\kappa \nabla \w \cdot \bn), Eu-{\cal Q}_hE u\rangle_{\partial
T}\\
&+  2\mu  \langle Q_0\w-Q_b\w,   (\kappa\nabla u -    {\cal Q}_1 (\kappa  \nabla u ))\cdot \bn   \rangle_{\partial T} \\
& -\langle e_0-e_b,
\kappa \nabla({\cal Q}_h(E\w)-E\w)\cdot \textbf{n}  \rangle_{\partial T}\\
&+\langle \kappa \nabla e_0\cdot
\textbf{n}-e_{g },{\cal Q}_hE
\w- E\w\rangle_{\partial T}   \Big\}-s(e_h,Q_h\w)+s(Q_hu,Q_h\w)\\
=&\sum_{T\in {\cal T}_h}\Big\{ ( \mu^2(\w-Q_0\w)-2\mu \nabla \cdot (\kappa \nabla \w -{\cal Q}_1 (\kappa \nabla \w)  ), e_0)_T \\
&- \langle  \kappa  \nabla (Eu- {\cal
Q}_h(Eu)  ) \cdot \textbf{n},
 Q_0\w-Q_b\w \rangle_{\partial T} \\
&+  \langle  \kappa \nabla Q_0\w\cdot \textbf{n}-Q_g(\kappa \nabla \w \cdot \bn), Eu-{\cal Q}_hE u\rangle_{\partial
T}\\
&+  2\mu  \langle Q_0\w-Q_b\w,   (\kappa\nabla u -    {\cal Q}_1 (\kappa  \nabla u ))\cdot \bn   \rangle_{\partial T} \\
& -\langle e_0-e_b,
\kappa \nabla({\cal Q}_h(E\w)-E\w)\cdot \textbf{n}  \rangle_{\partial T}\\
&+\langle \kappa \nabla e_0\cdot
\textbf{n}-e_{g },{\cal Q}_hE
\w- E\w\rangle_{\partial T}   \Big\}-s(e_h,Q_h\w)+s(Q_hu,Q_h\w),
\end{split}
\end{equation}
where we use  the third equation of (\ref{5.1}) to obtain 
 $$\sum_{T\in {\cal T}_h}2\mu \kappa \langle e_b, {\cal Q}_1 (\nabla \w) \cdot \bn \rangle_{\partial T}
=2\mu \kappa \langle e_b, {\cal Q}_1 (\nabla \w) \cdot \bn \rangle_{\partial  \Omega}
=2\mu \langle e_b,   \kappa \nabla \w  \cdot \bn \rangle_{\partial  \Omega}=0. $$
 
Each of these terms on the right-hand side of (\ref{5.4}) can be bounded as follows. 
Note that $t_0=\min\{3,k\}\leq 3$. For the first term of the right-hand side of (\ref{5.4}), from the Cauchy-Schwarz inequality, (\ref{5.2}),  the estimates (\ref{3.2}) and (\ref{3.2-2}), we have
\begin{equation*}\label{5.5-2} 
\begin{split}
&\Big|\sum_{T\in {\cal T}_h} ( \mu^2(\w-Q_0\w)-2\mu \nabla \cdot (\kappa \nabla \w -{\cal Q}_1 (\kappa \nabla \w)  ), e_0)_T \Big|\\
\leq &  \Big(\sum_{T\in {\cal T}_h} \|\w-Q_0\w\|_T^2  \Big)^{\frac{1}{2}}  \Big(\sum_{T\in {\cal T}_h} \|e_0\|_T^2  \Big)^{\frac{1}{2}}\\
&+
 \Big(\sum_{T\in {\cal T}_h} \|\nabla \cdot (\kappa \nabla \w -{\cal Q}_1 (\kappa \nabla \w) \|_T^2  \Big)^{\frac{1}{2}}  \Big(\sum_{T\in {\cal T}_h} \|e_0\|_T^2  \Big)^{\frac{1}{2}}\\
\leq & Ch^{k+1}\|\w\|_{k+1}\|e_0\|+Ch^{k-1}\|\w\|_{k+1}\|e_0\|\\
\leq & Ch^{k-1}\|\w\|_{k+1}\|e_0\|\\
\leq & Ch \|\w\|_{3}\|e_0\|\\
\leq & Ch \|\w\|_{4}\|e_0\|\\
\leq & Ch  \|e_0\|^2.
\end{split}
\end{equation*}
For the second term  of the right-hand side of (\ref{5.4}), it follows from the Cauchy-Schwarz inequality and the estimates (\ref{3.6})   and (\ref{3.8})  that
\begin{equation*}\label{5.5}
\begin{split}
 &\Big|\sum_{T\in {\cal T}_h}2\mu\langle  \kappa \nabla (Eu-{\cal Q}_h Eu) \cdot \textbf{n},
Q_0\w-Q_b\w \rangle_{\partial T}\Big|\\ 
\leq & \Big(\sum_{T\in {\cal T}_h}h_T^3\|  \kappa \nabla (Eu-{\cal Q}_h Eu) \cdot \textbf{n}\|^2_{\partial T}\Big)^{\frac{1}{2}}
\Big(\sum_{T\in {\cal T}_h}h_T^{-3}\| Q_0\w-Q_b\w  \|^2_{\partial T}\Big)^{\frac{1}{2}} \\
\leq &  Ch^{k-1}(\|u\|_{k+1}+h\delta_{k,2}\|u\|_4) h^{t_0-1}\|\w\|_{t_0+1}\\
\leq &  Ch^{k+t_0-2}(\|u\|_{k+1}+h\delta_{k,2}\|u\|_4)\|\w\|_{4}.\\
\end{split}
\end{equation*} 
For the third term  of the right-hand side of (\ref{5.4}), it follows from the Cauchy-Schwarz inequality and the estimates (\ref{3.5}) and (\ref{3.7})  that
\begin{equation*}\label{5.6}
\begin{split}
 &\Big|\sum_{T\in {\cal T}_h}\langle  \kappa \nabla Q_0\w\cdot \textbf{n}-Q_g(\kappa \nabla  \w\cdot \textbf{n}), Eu-{\cal Q}_h Eu\rangle_{\partial
T} \Big|\\ 
\leq & \Big(\sum_{T\in {\cal T}_h}h_T^{-1}\|   \kappa \nabla Q_0\w\cdot \textbf{n}-Q_g(\kappa \nabla  \w\cdot \textbf{n})\|^2_{\partial T}\Big)^{\frac{1}{2}}
\Big(\sum_{T\in {\cal T}_h}h_T\| Eu-{\cal Q}_h Eu \|^2_{\partial T}\Big)^{\frac{1}{2}} \\
 \leq &  Ch^{t_0-1}\|\w\|_{t_0+1}h^{k-1} \|u\|_{k+1} \\
\leq &  Ch^{k+t_0-2} \|u\|_{k+1} \|\w\|_{4}.\\
\end{split}
\end{equation*} 
For the fourth term  of the right-hand side of (\ref{5.4}), it follows from  the Cauchy-Schwarz inequality and  the estimates (\ref{3.8}) and (\ref{3.8-2}) that
\begin{equation*}\label{5.6-2} 
\begin{split}
 &\Big| 2\mu  \langle Q_0\w-Q_b\w,   (\kappa\nabla u -    {\cal Q}_1 (\kappa  \nabla u ))\cdot \bn   \rangle_{\partial T} \Big|\\ 
\leq & \Big(\sum_{T\in {\cal T}_h}h_T^{-3}\| Q_0\w-Q_b\w\|^2_{\partial T}\Big)^{\frac{1}{2}}
\Big(\sum_{T\in {\cal T}_h}h_T^3\|  (\kappa\nabla u -    {\cal Q}_1 (\kappa  \nabla u ))\cdot \bn  \|^2_{\partial T}\Big)^{\frac{1}{2}} \\
 \leq &  Ch^{t_0-1}\|\w\|_{t_0+1}h^{k+1} \|u\|_{k+1} \\
\leq &  Ch^{k+t_0} \|u\|_{k+1} \|\w\|_{4}.\\
\end{split}
\end{equation*}
For the fifth term  of the right-hand side of (\ref{5.4}), it follows from the Cauchy-Schwarz inequality and the estimate (\ref{3.6})  that
\begin{equation*}\label{5.8}
\begin{split}
 &\Big|\sum_{T\in {\cal T}_h} \langle  \kappa \nabla (E\w-{\cal Q}_h E\w) \cdot \textbf{n},
 e_0-e_b \rangle_{\partial T} \Big|\\ 
\leq & \Big(\sum_{T\in {\cal T}_h}h_T^3\|  \kappa \nabla (E\w-{\cal Q}_h E\w) \cdot \textbf{n}\|^2_{\partial T}\Big)^{\frac{1}{2}}
\Big(\sum_{T\in {\cal T}_h}h_T^{-3}\| e_0-e_b\|^2_{\partial T}\Big)^{\frac{1}{2}} \\
 \leq &  Ch^{t_0-1}(\|\w\|_{t_0+1}+h\delta_{t_0,2}\|\w\|_4) \3bar e_h\3bar \\
\leq &  Ch^{ t_0-1} \|\w\|_{4}\3bar e_h\3bar.
\end{split}
\end{equation*}
For the sixth term of the right-hand side of (\ref{5.4}), it follows from the Cauchy-Schwarz inequality and  the estimate (\ref{3.5})   that
\begin{equation*}\label{5.10}
\begin{split}
 &\Big|\sum_{T\in {\cal T}_h}\langle  \kappa \nabla e_0\cdot \textbf{n}-e_g,E\w-{\cal Q}_h E\w\rangle_{\partial
T}  \Big|\\ 
\leq & \Big(\sum_{T\in {\cal T}_h}h_T^{-1}\|  \kappa \nabla e_0\cdot \textbf{n}-e_g\|^2_{\partial T}\Big)^{\frac{1}{2}}
\Big(\sum_{T\in {\cal T}_h}h_T\| E\w-{\cal Q}_h E\w\|^2_{\partial T}\Big)^{\frac{1}{2}} \\
 \leq &  Ch^{t_0-1} \|\w\|_{t_0+1}  \3bar e_h\3bar \\
\leq &  Ch^{ t_0-1} \|\w\|_{4}\3bar e_h\3bar.
\end{split}
\end{equation*}
For the seventh term of the right-hand side of (\ref{5.4}), it follows from the Cauchy-Schwarz inequality and the estimates (\ref{3.7}) and (\ref{3.8})  that
\begin{equation*}\label{5.11}
\begin{split}
  & \Big|s(Q_h\w,e_h) \Big| \\
\leq & \Big|\sum_{T\in {\cal T}_h}h_T^{-1} \langle
\kappa\nabla Q_0\w\cdot \textbf{n}-Q_g( \kappa\nabla  \w\cdot \textbf{n}), \kappa\nabla e_0 \cdot \textbf{n}-e_g \rangle\Big|\\
 & + \Big|\sum_{T\in {\cal T}_h}h_T^{-3} \langle  Q_0\w-Q_b\w, 
 e_0 -e_b \rangle\Big|\\
\leq & \Big(\sum_{T\in {\cal T}_h}h_T^{-1}\|   \kappa\nabla Q_0\w\cdot \textbf{n}-Q_g( \kappa\nabla  \w\cdot \textbf{n})\|^2_{\partial T}\Big)
^{\frac{1}{2}} \Big(\sum_{T\in {\cal T}_h}h_T^{-1}\| \kappa\nabla e_0 \cdot \textbf{n}-e_g\|^2_{\partial T}\Big)^{\frac{1}{2}} \\
  & + \Big(\sum_{T\in {\cal T}_h}h_T^{-3}\|   Q_0\w-Q_b\w\|^2_{\partial T}\Big)^{\frac{1}{2}}
\Big(\sum_{T\in {\cal T}_h}h_T^{-3}\|  e_0 -e_b\|^2_{\partial T}\Big)^{\frac{1}{2}} \\
 \leq &  Ch^{t_0-1} \|\w\|_{t_0+1}  \3bar e_h\3bar \\
\leq &  Ch^{ t_0-1} \|\w\|_{4}\3bar e_h\3bar.
\end{split}
\end{equation*}
For the last term of the right-hand side of (\ref{5.4}), it follows from the Cauchy-Schwarz inequality and the estimates (\ref{3.7}) and (\ref{3.8})   that
\begin{equation*}\label{5.12}
\begin{split}
  &\qquad\Big|s(Q_hu,Q_h\w) \Big|\\ 
&\leq  \Big|\sum_{T\in {\cal T}_h}h_T^{-1} \langle \kappa\nabla Q_0u\cdot \textbf{n}-Q_g( \kappa\nabla  u\cdot \textbf{n}), 
\kappa\nabla Q_0\w\cdot \textbf{n}-Q_g( \kappa\nabla  \w\cdot \textbf{n})\rangle_{\partial T}\Big|\\
  &\qquad+ \Big|\sum_{T\in {\cal T}_h}h_T^{-3} \langle  Q_0u-Q_bu, 
 Q_0\w-Q_b\w\rangle_{\partial T}\Big|\\
&\leq  \Big(\sum_{T\in {\cal T}_h}h_T^{-1}\|    \kappa\nabla Q_0u\cdot \textbf{n}-Q_g( \kappa\nabla  u\cdot \textbf{n})\|^2_{\partial T}\Big)
^{\frac{1}{2}}\cdot\\
 &\qquad\Big(\sum_{T\in {\cal T}_h}h_T^{-1}\| \kappa\nabla Q_0\w\cdot \textbf{n}-Q_g( \kappa\nabla  \w\cdot \textbf{n})\|^2_{\partial T}\Big)^{\frac{1}{2}} \\
  &\qquad+ \Big(\sum_{T\in {\cal T}_h}h_T^{-3}\|  Q_0u-Q_bu\|^2_{\partial T}\Big)^{\frac{1}{2}}
\Big(\sum_{T\in {\cal T}_h}h_T^{-3}\|  Q_0\w-Q_b\w\|^2_{\partial T}\Big)^{\frac{1}{2}} \\
&\leq   Ch^{t_0-1}\|\w\|_{t_0+1}h^{k-1} \|u\|_{k+1} \\
&\leq   Ch^{k+t_0-2} \|u\|_{k+1} \|\w\|_{4}.\\
\end{split}
\end{equation*}

Finally, by substituting all the above estimates 
into (\ref{5.4}),   we obtain
$$
(1-Ch)\|e_0\|^2\leq C \big(h^{t_0-1} \3bar e_h\3bar +
h^{k+t_0-2}(\|u\|_{k+1}+h\delta_{k,2}\|u\|_4)\big)\|\w\|_4,
$$
which, together with the regularity estimate (\ref{5.2}) and (\ref{4}),  gives rise to the desired $L^2$ error estimate
(\ref{5.3}). This completes the proof of the theorem.
\end{proof}

\begin{theorem}\label{theoremeb}
 Let $u_h\in V_h$ be the solution of
the weak Galerkin algorithm (\ref{2.7}) with finite elements of
order $k\geq 2$. Let $t_0=\min\{k,3\}$. Assume that the exact
solution of (\ref{0.1}) is sufficiently regular such that   $u\in H^{\max\{k+1,4\}}(\Omega)$, and
the dual problem (\ref{5.1}) has the $H^4$ regularity. Define 
\begin{equation*}\label{ebnorm}
 \|u_b\|=\Big(\sum_{T\in {\cal
T}_h}h_T \|u_b\|^2_{\partial T} \Big)^{\frac{1}{2}}.
\end{equation*}
 There
exists a constant $C$ such that
\begin{equation*}\label{5.3u0}
\|Q_bu-u_b\|\leq
Ch^{k+t_0-2}\Big(\|u\|_{k+1}+h\delta_{k,2}\|u\|_{4}\Big).
\end{equation*}
In other words, we have a sub-optimal order of convergence for $k=2$
and optimal order of convergence for $k\geq 3$.
\end{theorem}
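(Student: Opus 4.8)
The plan is to deduce this bound as a corollary of the two estimates already established, namely the $H^2$-type bound $\3bare_h\3bar\le Ch^{k-1}(\|u\|_{k+1}+h\delta_{k,2}\|u\|_4)$ from Theorem \ref{Theorem6.6} and the $L^2$ bound $\|e_0\|=\|Q_0u-u_0\|\le Ch^{k+t_0-2}(\|u\|_{k+1}+h\delta_{k,2}\|u\|_4)$ from Theorem \ref{Theorem7.3}. The governing idea is that the boundary component $e_b$ is controlled by its interior counterpart $e_0$, up to the stabilizing term which turns out to be of higher order.

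First I would split, on each $\partial T$, the quantity $e_b$ as $e_b=e_0-(e_0-e_b)$ and apply the triangle inequality together with $(a+b)^2\le 2a^2+2b^2$, which gives
$$
\sum_{T\in\T_h}h_T\|e_b\|_{\partial T}^2\le 2\sum_{T\in\T_h}h_T\|e_0\|_{\partial T}^2+2\sum_{T\in\T_h}h_T\|e_0-e_b\|_{\partial T}^2 .
$$
For the first sum I would use that $e_0=Q_0u-u_0\in P_k(T)$ is a polynomial, so the inverse trace inequality (\ref{x}) yields $h_T\|e_0\|_{\partial T}^2\le C\|e_0\|_T^2$; summing over $T$ bounds this sum by $C\|e_0\|^2$, which Theorem \ref{Theorem7.3} controls by $Ch^{2(k+t_0-2)}(\|u\|_{k+1}+h\delta_{k,2}\|u\|_4)^2$.

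For the second sum I would recognize the stabilizer directly: writing $h_T\|e_0-e_b\|_{\partial T}^2=h_T^4\big(h_T^{-3}\|e_0-e_b\|_{\partial T}^2\big)\le h^4\big(h_T^{-3}\|e_0-e_b\|_{\partial T}^2\big)$ and summing, the factor $h_T^{-3}\|e_0-e_b\|_{\partial T}^2$ is exactly one of the two pieces of $s(e_h,e_h)$, so the whole sum is at most $h^4 s(e_h,e_h)\le h^4\3bare_h\3bar^2$. Invoking Theorem \ref{Theorem6.6} then bounds this by $Ch^4h^{2(k-1)}(\cdots)^2=Ch^{2(k+1)}(\cdots)^2$. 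Combining the two sums gives
$$
\|Q_bu-u_b\|\le C\big(\|e_0\|+h^2\3bare_h\3bar\big)\le C\big(h^{k+t_0-2}+h^{k+1}\big)\big(\|u\|_{k+1}+h\delta_{k,2}\|u\|_4\big).
$$

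The final observation is purely a comparison of exponents: since $t_0=\min\{k,3\}\le 3$ we have $k+1\ge k+t_0-2$, so for $h\le 1$ the stabilizer contribution $h^{k+1}$ is dominated by $h^{k+t_0-2}$, which yields the asserted rate and hence sub-optimal convergence for $k=2$ and optimal convergence for $k\ge 3$. There is no genuine analytic obstacle here, as the result is a corollary of the two preceding theorems; the only point requiring care is the bookkeeping of the powers of $h$ to confirm that the stabilizer term is of equal or higher order, and the fact that the inverse trace inequality may be applied only to the polynomial component $e_0$ (for $e_b$ no such interior-to-boundary bound is available, which is precisely why the decomposition through $e_0$ is needed).
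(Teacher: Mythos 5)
Your proposal is correct, and it takes a genuinely different — and considerably shorter — route than the paper. The paper proves this theorem by going back to the error equation (\ref{4.1}): it tests with $v=\{0,e_b,0\}$, uses the defining identities (\ref{2.4}) and (\ref{2.4-2}) to rewrite $(E_w v,E_w e_h)_T$ and $(\nabla_w v,\nabla_w e_h)_T$ as boundary pairings against $e_b$, rearranges everything into an identity for $\sum_{T}h_T^{-3}\|e_b\|_{\partial T}^2$, and then bounds six separate terms (several of which carry negative powers of $h$, e.g.\ $h^{-2}\3bar e_h\3bar$ and $h^{-4}\|e_0\|$, that only cancel at the very end); it then invokes (\ref{4}) and (\ref{5.3}) exactly as you do. Your argument bypasses the error equation entirely: the decomposition $e_b=e_0-(e_0-e_b)$, the inverse trace inequality (\ref{x}) applied to the polynomial $e_0$, and the observation that $\sum_T h_T^{-3}\|e_0-e_b\|_{\partial T}^2\le s(e_h,e_h)\le \3bar e_h\3bar^2$ reduce the theorem to a pure corollary of Theorems \ref{Theorem6.6} and \ref{Theorem7.3}, under the same hypotheses those theorems require. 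The exponent bookkeeping is also right: the stabilizer contribution enters at order $h^{2}\3bar e_h\3bar\lesssim h^{k+1}$, and since $t_0=\min\{k,3\}\le 3$ one has $k+1\ge k+t_0-2$, so the $\|e_0\|$ term dominates and gives the stated rate. What your approach buys is brevity, transparency about why the $h_T^{-3}$ weight in the stabilizer is exactly what is needed, and far less room for bookkeeping errors; what the paper's approach buys is mainly structural uniformity, since the same test-function technique is reused for the $e_g$ estimate in Theorem \ref{theoremeg}, where the companion piece of the stabilizer plays the analogous role. Your closing remark is also the right caveat: the inverse trace inequality is available only for the interior polynomial component $e_0$, which is precisely why the detour through $e_0$ is unavoidable.
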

\begin{proof}
Letting 
$v=\{0,e_b,0\}$ in the error equation (\ref{4.1}), we obtain
\begin{equation}\label{ee1} 
\begin{split}
&(E_{w} e_h,E_{w}v)_h+2\mu (\kappa \nabla_w e_h,\nabla_w v)_h +\sum_{T\in {\cal T}_h}h_T^{-3} \langle e_0 -e_b, -e_b \rangle_{\partial T}\\
=&-\sum_{T\in {\cal T}_h}\langle  \kappa  \nabla (Eu- {\cal
Q}_h(Eu)  ) \cdot \textbf{n},
  -e_b \rangle_{\partial T} \\
 &+ \sum_{T\in {\cal T}_h} 2\mu  \langle  -e_b,   (\kappa\nabla u -    {\cal Q}_1 (\kappa  \nabla u ))\cdot \bn   \rangle_{\partial T}  
+\sum_{T\in {\cal T}_h} h_T^{-3} \langle Q_0u-Q_bu, -e_b \rangle_{\partial T}.\\
\end{split}
\end{equation}

 Letting $\varphi=E_we_h$ in  (\ref{2.4}) and $\boldsymbol{\psi}=\nabla_w e_h$ in  (\ref{2.4-2})  yields
 \begin{equation*}\label{2.4q}
  (E_{ w}v,E_w e_h)_T= 
- \langle e_b ,\kappa \nabla (E_w e_h)\cdot  \textbf{n}\rangle_{\partial T},
 \end{equation*}
 \begin{equation*}\label{2.4q-2}
  ( \nabla_{ w}v,\nabla_w e_h)_T=  
 \langle e_b ,\nabla_w e_h\cdot  \textbf{n}\rangle_{\partial T},
 \end{equation*}
which, combining with re-arranging the terms involved in (\ref{ee1}), give
\begin{equation}\label{righ}
\begin{split}
&\sum_{T\in {\cal T}_h} h_T^{-3}\|  e_b\|^2_{\partial T}\\
=& \sum_{T\in {\cal T}_h}\{\langle e_b ,\kappa \nabla (E_w e_h)\cdot  \textbf{n}\rangle_{\partial T}-2\mu \kappa  \langle e_b ,\nabla_w e_h\cdot  \textbf{n}\rangle_{\partial T}+ h_T^{-3}\langle   e_0,  
 e_b\rangle_{\partial T}\\&+\langle  \kappa  \nabla (Eu- {\cal
Q}_h(Eu)  ) \cdot \textbf{n},
   e_b \rangle_{\partial T} \\
 &+  2\mu  \langle  -e_b,   (\kappa\nabla u -    {\cal Q}_1 (\kappa  \nabla u ))\cdot \bn   \rangle_{\partial T}  +h_T^{-3} \langle Q_0u-Q_bu, -e_b \rangle_{\partial T} \}.
\end{split}
\end{equation}

Each term on the right-hand side of (\ref{righ}) can be bounded as follows. For the first term of the right-hand side of   (\ref{righ}), using Cauchy-Schwarz inequality, the trace inequality (\ref{x})
and inverse inequality, we obtain
\begin{equation*}\label{aa1}
 \begin{split}
  &\Big|\sum_{T\in {\cal T}_h}\langle e_b ,\kappa \nabla (E_w e_h)\cdot \textbf{n}\rangle_{\partial T}\Big|\\
\leq &\Big(\sum_{T\in {\cal T}_h} h_T\|e_b\|_{\partial T}^2\Big)^{\frac{1}{2}}\Big(\sum_{T\in {\cal T}_h} h_T^{-1}\|\kappa \nabla (E_w e_h)\cdot \textbf{n}\|_{\partial T}^2\Big)^{\frac{1}{2}}\\
\leq & C\Big(\sum_{T\in {\cal T}_h} h_T\|e_b\|_{\partial T}^2\Big)^{\frac{1}{2}}\Big(\sum_{T\in {\cal T}_h} h_T^{-4}\| E_w e_h \|_{ T}^2\Big)^{\frac{1}{2}}\\
\leq & C h ^{-2} \|e_b\|  \3bar  e_h \3bar.
 \end{split}
\end{equation*}
 For the second term of  the right-hand side  of  (\ref{righ}), using Cauchy-Schwarz inequality and the trace inequality (\ref{x}), we obtain
\begin{equation*}\label{aa2-2}
 \begin{split}
  &\Big|\sum_{T\in {\cal T}_h} 2\mu \kappa  \langle e_b ,\nabla_w e_h\cdot  \textbf{n}\rangle_{\partial T}\Big|\\
\leq &\Big(\sum_{T\in {\cal T}_h} h_T^{-1}\|\nabla_w e_h\cdot  \textbf{n}\|_{\partial T}^2\Big)^{\frac{1}{2}}\Big(\sum_{T\in {\cal T}_h} h_T\| e_b\|_{\partial T}^2\Big)^{\frac{1}{2}}\\
\leq & Ch ^{-1} \|e_b\|  \3bar  e_h \3bar.
 \end{split}
\end{equation*}
 For the  third  term of  the right-hand side of   (\ref{righ}), using Cauchy-Schwarz inequality and the trace inequality (\ref{x}), we obtain
\begin{equation*}\label{aa2}
 \begin{split}
  &\Big|\sum_{T\in {\cal T}_h} h_T^{-3}\langle   e_0,  
 e_b\rangle_{\partial T}\Big|\\
\leq &\Big(\sum_{T\in {\cal T}_h} h_T^{-7}\|e_0\|_{\partial T}^2\Big)^{\frac{1}{2}}\Big(\sum_{T\in {\cal T}_h} h_T\| e_b\|_{\partial T}^2\Big)^{\frac{1}{2}}\\
\leq & C h ^{-4} \|e_b\|  \|e_0\|.
 \end{split}
\end{equation*}
 For the fourth  term of  the right-hand side of   (\ref{righ}), using Cauchy-Schwarz inequality and the estimate (\ref{3.6}), we obtain
\begin{equation*}\label{aa3}
 \begin{split}
  &\Big|\sum_{T\in {\cal T}_h}\langle  \kappa \nabla (Eu-{\cal Q}_h Eu) \cdot \textbf{n},
  -e_b \rangle_{\partial T}\Big|\\
\leq & Ch ^{-2}\Big(\sum_{T\in {\cal T}_h} h_T^3 \|\kappa \nabla (Eu-{\cal Q}_h Eu) \cdot \textbf{n}\|_{\partial T}^2\Big)^{\frac{1}{2}}\Big(\sum_{T\in {\cal T}_h} h_T\| e_b\|_{\partial T}^2\Big)^{\frac{1}{2}}\\
\leq & C h ^{k-3} \|e_b\| (\|u\|_{k+1}+h\delta_{k,2}\|u\|_4).
 \end{split}
\end{equation*}
For the fifth  term of the right-hand side of     (\ref{righ}), using Cauchy-Schwarz inequality and the estimate (\ref{3.8-2}) , we obtain
\begin{equation*}\label{aa4-2}
 \begin{split}
  &\Big|\sum_{T\in {\cal T}_h}  2\mu  \langle  -e_b,  
 (\kappa\nabla u -    {\cal Q}_1 (\kappa  \nabla u ))\cdot \bn   \rangle_{\partial T}\Big|\\
\leq &  C h^{-2} \Big(\sum_{T\in {\cal T}_h}  h_T \|e_b\|_{\partial T}^2\Big)^{\frac{1}{2}}
\Big(\sum_{T\in {\cal T}_h} h_T^{3}\| (\kappa\nabla u -    {\cal Q}_1 (\kappa  \nabla u ))\cdot \bn  \|_{\partial T}^2\Big)^{\frac{1}{2}}\\
\leq & C h ^{k-1} \|e_b\| \|u\|_{k+1}.
 \end{split}
\end{equation*}
For the last term of  the right-hand side   of (\ref{righ}), using Cauchy-Schwarz inequality and the estimate (\ref{3.8}), we obtain
\begin{equation*}\label{aa4}
 \begin{split}
  &\Big|\sum_{T\in {\cal T}_h} h_T^{-3}\langle  Q_0u-Q_bu,  
 -e_b\rangle_{\partial T}\Big|\\
\leq &  Ch ^{-2} \Big(\sum_{T\in {\cal T}_h} h_T^{-3} \|Q_0u-Q_bu\|_{\partial T}^2\Big)^{\frac{1}{2}}\Big(\sum_{T\in {\cal T}_h} h_T\| e_b\|_{\partial T}^2\Big)^{\frac{1}{2}}\\
\leq & C h ^{k-3} \|e_b\| \|u\|_{k+1}.
 \end{split}
\end{equation*}

Substituting all the above estimates into  (\ref{righ}) and  the shape-regularity assumption for the finite element partition ${\cal T}_h$ give 
\begin{equation*}
 \begin{split}
  h^{-4}\|e_b\|^2\leq & C\sum_{T\in {\cal T}_h}h_T^{-3}\langle e_b,e_b\rangle_{\partial T}\\
\leq & C \Big(h^{-2}\3bar e_h\3bar+h^{-1} \3bar e_h\3bar +h^{-4}\|e_0\|+h^{k-3}
(\|u\|_{k+1}\\
&+h\delta_{k,2}\|u\|_4)+h^{k-1}\|u\|_{k+1}+h^{k-3}\|u\|_{k+1}\Big)\|e_b\|,
 \end{split}
\end{equation*}
which, combining with (\ref{5.3}) and  (\ref{4}), yields
\begin{equation*}
 \begin{split}
  \|e_b\|  \leq & C\Big(h^2\3bar e_h\3bar+ \|e_0\|+h^{k+1}(\|u\|_{k+1}+h\delta_{k,2}\|u\|_4)+h^{k+1}\|u\|_{k+1}\Big) \\
\leq  & C\Big(h^2 h^{k-1} (\|u\|_{k+1}+h\delta_{k,2}\|u\|_4 )+  h^{k+t_0-2} (\|u\|_{k+1}+h\delta_{k,2}\|u\|_{4} )\\
&+h^{k+1}(\|u\|_{k+1}+h\delta_{k,2}\|u\|_4)+h^{k+1}\|u\|_{k+1}\Big) \\
 \leq  & C h^{k+t_0-2} (\|u\|_{k+1}+h\delta_{k,2}\|u\|_{4} ).
 \end{split}
\end{equation*}
This completes the proof.
\end{proof}

\begin{theorem}\label{theoremeg}
 Let $u_h\in V_h$ be the solution of
the weak Galerkin algorithm (\ref{2.7}) with finite elements of
order $k\geq 2$. Let $t_0=\min\{k,3\}$. Assume that the exact
solution of (\ref{0.1}) is sufficiently regular such that $u\in H^{\max\{k+1,4\}}(\Omega)$, and
the dual problem (\ref{5.1}) has the $H^4$ regularity. Define 
\begin{equation*}\label{ebnormg}
 \|u_g\|=\Big(\sum_{T\in {\cal
T}_h}h_T \|u_g\|^2_{\partial T} \Big)^{\frac{1}{2}}.
\end{equation*}
 There 
exists a constant $C$ such that
\begin{equation*}\label{5.3u0g}
\|Q_g(\kappa \nabla u_0 \cdot \textbf{n})-u_g\|\leq
Ch^{k+t_0-3}\Big(\|u\|_{k+1}+h\delta_{k,2}\|u\|_{4}\Big).
\end{equation*}
In other words, we have a sub-optimal order of convergence for $k=2$
and optimal order of convergence for $k\geq 3$.
\end{theorem}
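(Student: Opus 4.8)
The plan is to bypass the dual problem (\ref{5.1}) entirely and to reduce $Q_g(\kappa\nabla u_0\cdot\textbf{n})-u_g$ to two quantities that the preceding sections already control. First I would note that, since $u_0\in P_k(T)$, $\kappa$ is piecewise constant and $\textbf{n}$ is constant on each flat face $e$, the trace $\kappa\nabla u_0\cdot\textbf{n}$ already lies in $P_{k-1}(e)$; hence $Q_g(\kappa\nabla u_0\cdot\textbf{n})=\kappa\nabla u_0\cdot\textbf{n}$ and the target norm is just $\|\kappa\nabla u_0\cdot\textbf{n}-u_g\|$. Substituting $u_0=Q_0u-e_0$ and $u_g=Q_g(\kappa\nabla u\cdot\textbf{n})-e_g$ then produces the exact identity
\begin{equation*}
\kappa\nabla u_0\cdot\textbf{n}-u_g=\big(\kappa\nabla(Q_0u)\cdot\textbf{n}-Q_g(\kappa\nabla u\cdot\textbf{n})\big)-\big(\kappa\nabla e_0\cdot\textbf{n}-e_g\big),
\end{equation*}
which is the pivot of the whole argument.

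Next I would bound the two brackets separately, each first in the $h_T^{-1}$-weighted boundary norm and then transferred to the $h_T$-weighted norm $\|\cdot\|$ at the price of a single factor $h$. The first bracket is a pure projection error, so (\ref{3.7}) with $m=k$ gives $\big(\sum_{T}h_T^{-1}\|\kappa\nabla(Q_0u)\cdot\textbf{n}-Q_g(\kappa\nabla u\cdot\textbf{n})\|_{\partial T}^2\big)^{1/2}\le Ch^{k-1}\|u\|_{k+1}$, whence its $\|\cdot\|$-norm is $\le Ch^{k}\|u\|_{k+1}$. The second bracket is exactly the integrand of the first half of the stabilizer, so $\sum_{T}h_T^{-1}\|\kappa\nabla e_0\cdot\textbf{n}-e_g\|_{\partial T}^2\le s(e_h,e_h)\le\3bar e_h\3bar^2$; transferring the weight and invoking Theorem \ref{Theorem6.6} bounds its $\|\cdot\|$-norm by $Ch^{k}(\|u\|_{k+1}+h\delta_{k,2}\|u\|_4)$. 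Adding the two contributions gives $\|Q_g(\kappa\nabla u_0\cdot\textbf{n})-u_g\|\le Ch^{k}(\|u\|_{k+1}+h\delta_{k,2}\|u\|_4)$; since $t_0=\min\{k,3\}\le 3$ we have $k\ge k+t_0-3$, so this already implies the asserted estimate $Ch^{k+t_0-3}(\|u\|_{k+1}+h\delta_{k,2}\|u\|_4)$.

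The one point that needs care is the weight bookkeeping: the stabilizer and (\ref{3.7}) both live naturally in the $h_T^{-1}$-weighted boundary norm, while the theorem measures $u_g$ in the $h_T$-weighted norm, and it is exactly this mismatch that accounts for (and only for) one power of $h$. The main obstacle is methodological rather than computational, namely resisting the temptation to imitate the proof of Theorem \ref{theoremeb} by testing (\ref{4.1}) against $v=\{0,0,e_g\}$: that route generates the term $\sum_{T}h_T^{-1}\langle\kappa\nabla e_0\cdot\textbf{n},e_g\rangle_{\partial T}$, in which $\kappa\nabla e_0\cdot\textbf{n}$ is not controlled by $\3bar e_h\3bar$ by itself, thereby forcing a detour through $\|e_0\|$ from Theorem \ref{Theorem7.3} and the loss of a further power of $h$. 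The decomposition above avoids that term completely, so neither the dual problem (\ref{5.1}) nor its $H^4$ regularity is actually needed for this particular estimate.
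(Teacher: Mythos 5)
Your computations are all internally valid: the identity $\kappa\nabla u_0\cdot\bn-u_g=\big(\kappa\nabla(Q_0u)\cdot\bn-Q_g(\kappa\nabla u\cdot\bn)\big)-\big(\kappa\nabla e_0\cdot\bn-e_g\big)$, the use of (\ref{3.7}) with $m=k$, the observation that the second bracket is part of $s(e_h,e_h)\le \3bar e_h\3bar^2$, and the single power of $h$ gained in passing from the $h_T^{-1}$-weighted to the $h_T$-weighted boundary norm. The problem is that you have proved a different statement from the one the paper intends, because the ``$u_0$'' in the displayed estimate is a typo for the exact solution $u$. Three pieces of internal evidence settle this: the paper's own proof estimates $\|e_g\|$, where $e_g$ is the third component of $e_h=Q_hu-u_h$, i.e.\ $e_g=Q_g(\kappa\nabla u\cdot\bn)-u_g$; the theorem assumes $H^4$ regularity of the dual problem (\ref{5.1}), a hypothesis your argument never touches; and the stated rate $h^{k+t_0-3}$ together with the closing remark about sub-optimality for $k=2$ would both be inexplicable under your reading, since your bound is the strictly better $Ch^k$, uniformly in $k$. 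When a blind proof renders two hypotheses and the qualitative conclusion of a theorem vacuous, the most likely explanation is that the target quantity has been misidentified, and that is what happened here.

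For the intended quantity the gap is precisely the term you describe as avoidable. Writing $e_g=-\big(\kappa\nabla e_0\cdot\bn-e_g\big)+\kappa\nabla e_0\cdot\bn$, your stabilizer argument handles the first piece and contributes $Ch^{k}\big(\|u\|_{k+1}+h\delta_{k,2}\|u\|_4\big)$, but the second piece is not controlled by $\3bar e_h\3bar$: when $\mu=0$ the triple-bar norm provides no $L^2$ or $H^1$ control of $e_0$, so one must use the trace inequality (\ref{x}) and an inverse inequality to get $\big(\sum_{T}h_T\|\kappa\nabla e_0\cdot\bn\|^2_{\partial T}\big)^{1/2}\le Ch^{-1}\|e_0\|$, and then invoke Theorem \ref{Theorem7.3} --- hence the dual problem and its $H^4$ regularity after all. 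This contribution is $Ch^{-1}\cdot h^{k+t_0-2}=Ch^{k+t_0-3}$, it dominates, and it is exactly the source of the stated rate and of the sub-optimality at $k=2$. With this one additional step your decomposition does become a complete and correct proof, and one that is organized more transparently than the paper's, which instead substitutes $v=\{0,0,e_g\}$ into the error equation (\ref{4.1}), isolates $\sum_T h_T^{-1}\|e_g\|^2_{\partial T}$, and estimates the resulting boundary terms to reach $\|e_g\|\le C\big(h\3bar e_h\3bar+h^{-1}\|e_0\|+h^{k}\|u\|_{k+1}\big)$; but your concluding claim that neither the dual problem nor Theorem \ref{Theorem7.3} is needed must be withdrawn.
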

\begin{proof}
Letting 
$v=\{0,0,e_g\}$ in the error equation (\ref{4.1}), we obtain
\begin{equation}\label{ee2} 
\begin{split}
& (E_{w} e_h,E_{w}v)_h+2\mu (\kappa \nabla_w e_h,\nabla_w v)_h+ \sum_{T\in {\cal T}_h}
 h_T^{-1}\langle  \kappa \nabla
e_0 \cdot \textbf{n}- e_g,  -
e_g\rangle_{\partial T}  \\
=& \sum_{T\in {\cal T}_h}\langle  -e_g, Eu-{\cal Q}_hE u\rangle_{\partial
T} +\sum_{T\in {\cal T}_h} h_T^{-1}\langle  \kappa \nabla
Q_0u \cdot \textbf{n}- Q_g(\kappa \nabla u\cdot \bn),-
e_g\rangle_{\partial T} .
\end{split}
\end{equation}

 Letting $\varphi=E_we_h$ in  (\ref{2.4})   and $\boldsymbol{\psi}=\nabla_w e_h$ in  (\ref{2.4-2})  yields 
 \begin{equation*}\label{2.4qg}
   (E_{ w}v,E_we_h)_T= 
 \langle   e_{g },E_we_h \rangle_{\partial T},
 \end{equation*}
\begin{equation*}\label{2.4qg-2}
 ( \nabla_{ w}v,\nabla_w e_h)_T= 0,
 \end{equation*}
which, combining with re-arranging the terms  involved in (\ref{ee2}), yield
\begin{equation}\label{righg}
\begin{split}
&\sum_{T\in {\cal T}_h} h_T^{-1}\langle  e_g,  
 e_g\rangle_{\partial T}=
\sum_{T\in {\cal T}_h}
 -\langle   e_{g },E_we_h \rangle_{\partial T}+\sum_{T\in {\cal T}_h} h_T^{-1}\langle  \kappa\nabla e_0\cdot \textbf{n},  
 e_g\rangle_{\partial T}\\
&-\sum_{T\in {\cal T}_h}\langle e_g, Eu-{\cal Q}_h Eu\rangle_{\partial
T}+ \sum_{T\in {\cal T}_h}  h_T^{-1}\langle  \kappa \nabla
Q_0u \cdot \textbf{n}- Q_g(\kappa \nabla
 u_0 \cdot \textbf{n}), -
e_g\rangle_{\partial T}.
\end{split}
\end{equation}

Each term on the right-hand side of (\ref{righg}) can be bounded as follows. For the first term of   the right-hand side   of (\ref{righg}), 
using Cauchy-Schwarz inequality, the trace inequality (\ref{x}), we obtain
\begin{equation*}\label{aa1g}
 \begin{split}
  &\Big|\sum_{T\in {\cal T}_h}
 \langle   e_{g },E_we_h \rangle_{\partial T}\Big|\\
\leq &\Big(\sum_{T\in {\cal T}_h} h_T\|e_g\|_{\partial T}^2\Big)^{\frac{1}{2}}\Big(\sum_{T\in {\cal T}_h} h_T^{-1}\| E_w e_h \|_{\partial T}^2\Big)^{\frac{1}{2}}\\
\leq & C\Big(\sum_{T\in {\cal T}_h} h_T\|e_g\|_{\partial T}^2\Big)^{\frac{1}{2}}\Big(\sum_{T\in {\cal T}_h} h_T^{-2}\| E_w e_h \|_{  T}^2\Big)^{\frac{1}{2}}\\
\leq & C h ^{-1} \|e_g\|  \3bar  e_h \3bar.
 \end{split}
\end{equation*}
 For the second term of  the right-hand side   of (\ref{righg}), using Cauchy-Schwarz inequality and the trace inequality (\ref{x}) and inverse inequality, we obtain
\begin{equation*}\label{aa2g}
 \begin{split}
  &\Big|\sum_{T\in {\cal T}_h} h_T^{-1}\langle  \kappa\nabla e_0\cdot \textbf{n},  
 e_g\rangle_{\partial T}\Big|\\
\leq &\Big(\sum_{T\in {\cal T}_h} h_T^{-3}\| \kappa\nabla e_0\cdot \textbf{n}\|_{\partial T}^2\Big)^{\frac{1}{2}}\Big(\sum_{T\in {\cal T}_h} h_T\| e_g\|_{\partial T}^2\Big)^{\frac{1}{2}}\\
\leq &\Big(\sum_{T\in {\cal T}_h} h_T^{-6}\|e_0 \|_{  T}^2\Big)^{\frac{1}{2}}\Big(\sum_{T\in {\cal T}_h} h_T\| e_g\|_{\partial T}^2\Big)^{\frac{1}{2}}\\
\leq & C h ^{-3} \|e_g\|  \|e_0\|.
 \end{split}
\end{equation*}
For the third term of  the right-hand side   of  (\ref{righg}), using Cauchy-Schwarz inequality and the estimate (\ref{3.5}), we obtain
\begin{equation*}\label{aa3g}
 \begin{split}
  &\Big|\sum_{T\in {\cal T}_h}\langle e_g, Eu-{\cal Q}_h Eu\rangle_{\partial
T}\Big|\\
\leq & C h ^{-1}\Big(\sum_{T\in {\cal T}_h} h_T \| e_g\|_{\partial T}^2\Big)^{\frac{1}{2}}\Big(\sum_{T\in {\cal T}_h} h_T\|Eu
-{\cal Q}_h Eu\|_{\partial T}^2\Big)^{\frac{1}{2}}\\
\leq & C h ^{k-2} \|e_g\|  \|u\|_{k+1}.
 \end{split}
\end{equation*}
 For the last term of the right-hand side   of  (\ref{righg}), using Cauchy-Schwarz inequality and the estimate (\ref{3.7}), we obtain
\begin{equation*}\label{aa4g}
 \begin{split}
  &\Big|\sum_{T\in {\cal T}_h}  h_T^{-1}\langle  \kappa \nabla
Q_0u \cdot \textbf{n}- Q_g(\kappa \nabla
 u_0 \cdot \textbf{n}), -
e_g\rangle_{\partial T}\Big|\\
\leq &  Ch ^{-1} \Big(\sum_{T\in {\cal T}_h} h_T^{-1} \| \kappa \nabla
Q_0u \cdot \textbf{n}- Q_g(\kappa \nabla
 u_0 \cdot \textbf{n})\|_{\partial T}^2\Big)^{\frac{1}{2}}\Big(\sum_{T\in {\cal T}_h} h_T\| e_g\|_{\partial T}^2\Big)^{\frac{1}{2}}\\
\leq & C h ^{k-2} \|e_g\| \|u\|_{k+1}.
 \end{split}
\end{equation*}

Substituting all the above estimates into  (\ref{righg})   and the shape-regularity assumption for the finite element partition ${\cal T}_h$ give 
\begin{equation*}
 \begin{split}
  h^{-2}\|e_g\|^2\leq & C\sum_{T\in {\cal T}_h}h_T^{-1}\langle e_g,e_g\rangle_{\partial T}\\
\leq & C (h^{-1}\3bar e_h\3bar+h^{-3}\|e_0\|+h^{k-2} \|u\|_{k+1}  )\|e_g\|,
 \end{split}
\end{equation*}
which, together with (\ref{5.3}) and (\ref{4}), gives
\begin{equation*}
 \begin{split}
  \|e_g\|  \leq & C(h\3bar e_h\3bar+h^{-1}\|e_0\|+h^{k} \|u\|_{k+1}  ) \\
\leq & C\Big(h\cdot h^{k-1} (\|u\|_{k+1}+h\delta_{k,2}\|u\|_4 )+h^{-1}\cdot h^{k+t_0-2} (\|u\|_{k+1}+h\delta_{k,2}\|u\|_{4} )  +h^{k} \|u\|_{k+1}\Big) \\
 \leq  & C h^{k+t_0-3} (\|u\|_{k+1}+h\delta_{k,2}\|u\|_{4} ).
 \end{split}
\end{equation*}
This completes the proof.
\end{proof}

\section{Numerical Tests}

In this section, we present some numerical results for the WG finite
element method analyzed in the previous sections. The goal is to
demonstrate the efficiency and the convergence theory established
for the method. For the simplicity of
 implementation, the weak function $v=\{v_0,v_b,v_g\}$ can be  discretized by polynomials of degree of $k$,  $k-1$ and $k-1$, respectively.
We could obtain all the same error estimates as obtained in the previous sections and the analysis could be derived  without any difficulty\cite{zz2014}. Details are omitted here.

In our experiments, we implement the lowest order (i.e., $k=2$) scheme
for the weak Galerkin algorithm (\ref{2.7}). In other words, the
implementation makes use of the following finite element space
$$
\widetilde{V}_h=\{v=\{v_0,v_b,v_g\}, v_0\in P_2(T), v_b\in
P_1(e), v_g\in P_1(e), T\in {\cal T}_h, e\in
 \E_h \}.
$$

For any given $v=\{v_0,v_b,v_g\}\in \widetilde{V}_h$, the
discrete weak second order elliptic operator $E_{ w} v$ is
computed as a constant locally on each element $T$ by solving the
following equation
  \begin{equation*}
  \begin{split}
  (E_{ w}v,\varphi)_T=(v_0,E\varphi)_T- 
 \langle v_b ,\kappa \nabla \varphi\cdot  \textbf{n}\rangle_{\partial T}+
 \langle  v_{g },\varphi  \rangle_{\partial T}, 
 \end{split}
 \end{equation*}
for all $\varphi \in P_0(T)$. Since $\varphi \in P_0(T)$, the above
equation can be simplified as
  \begin{equation*}
  \begin{split}
(E_{ w}v,\varphi)_T= 
 \langle  v_{g },\varphi  \rangle_{\partial T}. 
 \end{split}
 \end{equation*}

 The WG finite element scheme
(\ref{2.7}) was implemented on  uniform
triangular partition, which was obtained by partitioning the
domain into $n\times n$ sub-squares  and then  dividing each square element into two
triangles by the diagonal line with a negative slope. The mesh size
is denoted by $h=1/n$.

  Table \ref{NE:TRI:Case1-1} shows the numerical results for the
exact solution  $u=x^2(1-x)^2y^2(1-y)^2$. The numerical experiment is conducted 
 on the unit square domain $\Omega=(0,1)^2$. This case has   homogeneous boundary
conditions for both Dirichlet and Neumann. We take the coefficient matrix   $\kappa=[1/(3 (1+0.01)),0;0,1/(3 (1+0.01))]$ and $\mu=0.01$
in the whole domain $\Omega$.
 The results indicate that the
convergence rate for the solution of the weak Galerkin algorithm (\ref{2.7}) is of order $O(h)$ in the discrete $H^2$ norm, and is of
order $O(h^2)$ in the standard $L^2$ norm.  The numerical results are in good consistency with theory
for the  $H^2$  and $L^2$ norm of the error. 
 Figure \ref{homo}
illustrates the WG numerical solution for the mesh size  $1/64$, which totals to 4096 elements.

\begin{table}[H]
\begin{center}
 \caption{Numerical error and convergence order for the exact solution $u=x^2(1-x)^2y^2(1-y)^2$.}\label{NE:TRI:Case1-1}
 \begin{tabular}{|c|c|c|c|c|}
\hline
$1/n$        & $\|u_0 -Q_0u\| $ & order in $L^2$  norm &  $\3bar u_h -Q_hu\3bar $  & order in $H^2$  norm   \\
\hline
  1         & 0.05458   &   & 0.09913 &     \\
\hline
2  &0.02163   & 1.33 &  0.06649  &   0.58  \\
\hline
4 &  0.006307   &  1.78  &  0.03643  & 0.87   \\
\hline
8 &0.001716 &  1.88 &  0.01904 &  0.94  \\
\hline
16  &4.582e-04   &   1.91 &  0.009830   &  0.95  \\
\hline
32  &   1.181e-04  & 1.96   &0.004988   &   0.98  \\
\hline
64  &  2.981e-05 &   1.99  &  0.002506&    0.99\\
\hline
\end{tabular}
\end{center}
\end{table}

 %\begin{wrapfigure}{r}{0.35\textwidth} % Inline image example
  \begin{figure}[ht]
  \begin{center}
  \includegraphics[width=0.35\textwidth]{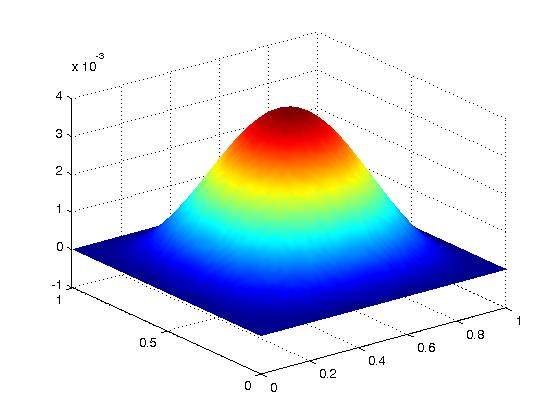}
  \caption{WG finite element solution for the exact solution $u=x^2(1-x)^2y^2(1-y)^2$ with mesh size 1/64.}\label{homo}
 \end{center}
  \end{figure}
 %\end{wrapfigure}

Table  \ref{NE:TRI:Case2-1} presents the numerical results when the exact solution is given by $u=\sin(\pi x)\sin(\pi y)$ 
on the unit square domain $\Omega=(0,1)^2$, which has  nonhomogeneous boundary
conditions.  The coefficient matrix $\kappa$ and the constant $\mu$ are taken to be the same values as the previous test.
  It shows that the
convergence rates for the solution of the weak Galerkin algorithm (\ref{2.7}) in
the $H^2$ and $L^2$ norms are of order $O(h)$ and $O(h^2)$,
respectively, which are in consistency with theory
for the $L^2$ and $H^2$ norms of the error. Figure \ref{nonhomo} gives the WG numerical solution for   the mesh size  $1/64$.

\begin{table}[H]
\begin{center}
\caption{Numerical error and convergence order for the exact solution
$u=\sin(\pi x)\sin (\pi y)$.}\label{NE:TRI:Case2-1}
\begin{tabular}{|c|c|c|c|c|}
\hline
$1/n$        & $\|u_0 -Q_0u\| $ & order in $L^2$  norm  &  $\3bar u_h -Q_hu\3bar$  & order in $H^2$  norm   \\
\hline
  1         &  5.587  &    &   10.20  &     \\
\hline
2  &  2.143   &  1.38  &  6.590 &  0.63   \\
\hline
4 &     0.6017  & 1.83    &   3.526 &  0.90\\
\hline
8&  0.1549 &  1.96  &   1.793 &  0.98  \\
\hline
16  &    0.03904  &   1.99  & 0.9005 &   0.99 \\
\hline
32 & 0.009783     &  2.00   & 0.4508    &  1.00  \\
\hline
64&   0.002447   & 2.00  &   0.2255   & 1.00  \\
\hline
\end{tabular}
\end{center}
\end{table}

   %\begin{wrapfigure}{r}{0.35\textwidth} % Inline image example
 \begin{figure}[ht]
\begin{center}
\includegraphics[width=0.35\textwidth]{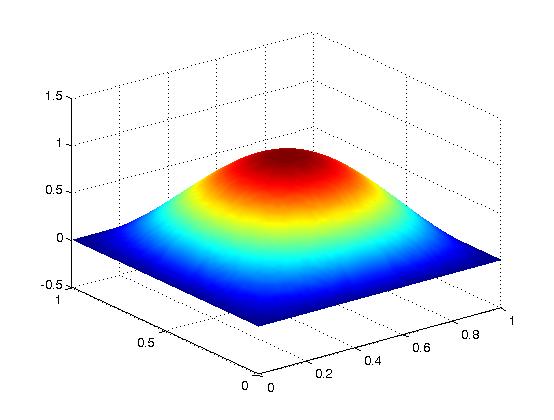}
\caption{WG finite element solution for the exact solution  $u=sin(\pi x)sin (\pi y)$ with mesh size 1/64.} \label{nonhomo}
\end{center}
 \end{figure}
% \end{wrapfigure}

In the rest of this section,  we shall conduct three different  types of  numerical tests arising from the FT model. Firstly, we take  $\kappa=[1,0;0,1]$ in 
the subdomain $\Omega_0=(1/4,3/8)^2$  and    $\kappa=[10^{-5},0;0,10^{-5}]$ in the rest of the whole domain $ \Omega=(0,1)^2$.
We take $\mu$ to be 0 in the whole domain  $ \Omega$.
The right-hand side $f$ is taken to be zero. For the  Dirichlet boundary condition  $u=\xi$, we take the boundary function $\xi$ 
as a piecewise continuous function which takes value 1 on the middle of each  boundary
segment and takes  value 0 on all the other edges for each of the four  boundary edges.   As to the Neumann boundary
condition $\kappa\nabla u\cdot \textbf{n}= \nu$, we take 
  $\nu=- \xi$ in the test. Figure
\ref{real1} illustrates the WG finite element solution for the mesh size  $1/64$.

%\begin{wrapfigure}{r}{0.35\textwidth} % Inline image example
 \begin{figure}[ht]
\begin{center}
\includegraphics[width=0.35\textwidth]{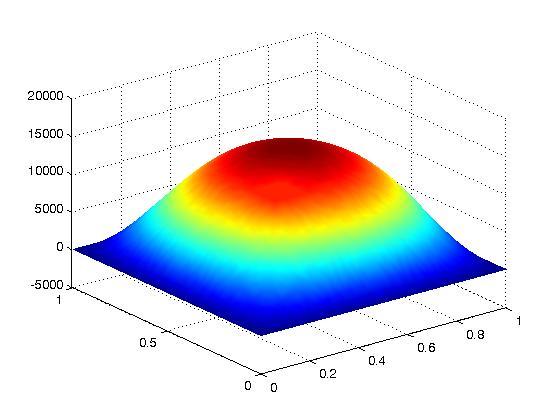}
\caption{WG finite element solution  with
discontinuous coefficients  with mesh size 1/64.} \label{real1}
\end{center}
 \end{figure}
%\end{wrapfigure}

Secondly, we
take the same $\kappa$,  $\mu$ and $f$ as in the previous test.  The Dirichlet boundary condition $u=\xi$
 is set as an approximate Direc-$\delta$ function on
each of the four  boundary edges. More precisely, this boundary data assumes
value $\frac{1}{|e|}$ on the middle edge of each boundary segment,
and takes value 0 on all the other edges. As to the Neumann boundary
condition $\kappa\nabla u\cdot \textbf{n}= \nu$, we take 
  $\nu=- \xi$ in the test. Figure
\ref{real2} illustrates the WG finite element solution for the mesh size  $1/64$.

%\begin{wrapfigure}{r}{0.35\textwidth} % Inline image example
 \begin{figure}[ht]
\begin{center}
\includegraphics[width=0.35\textwidth]{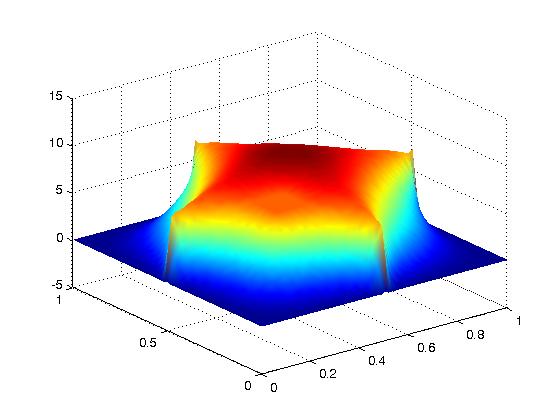}
\caption{WG finite element solution with
discontinuous   coefficients  with mesh size 1/64 with different Dirichlet boundary condition set as an approximate Direc-$\delta$ function.} \label{real2}
\end{center}
 \end{figure}
%\end{wrapfigure}

At last, we consider a real problem  arising from FT model which is implemented on the domain $\Omega=(0,50)^2$. There are two blocks in the domain $\Omega$: one 
is at (25,15) with radius  4; the other is at (35,20) with radius 3. 
The right hand side  data $f$ is the function modeling the light source, and we use Gaussian function  to model each point source $(x_0,y_0)$, with their centers 
locating  around the boundary of the domain.  
More precisely, we set $f=\sqrt{2 \pi \epsilon} e^{-\frac{(x-x_0)^2+(y-y_0)^2}{2\epsilon}}$ with $\epsilon=100/64$. The coefficient matrix $\kappa$ is taken to be   
$[1/(3 (1+0.01)),0;0,1/(3 (1+0.01))]$  and $\mu$ is taken to be 0.01 in the whole domain  $\Omega$ which arise from the data of the real
 problem in FT.  Figures
\ref{source1}-\ref{source3} illustrate  the WG finite element solution for   light sources with the mesh size  $1/64$, where the coordinates of    light sources are
(13.3065,    0.0730994) and
(49.8272,    13.5234), respectively.

In the future, we plan to conduct more numerical experiments   for the weak Galerkin
algorithm (\ref{2.7}), particularly for elements of order higher
than $k=2$ and for the real problems arising from the FT model. There is also a need of developing fast solution
techniques for the matrix problem arising from the WG finite element
scheme (\ref{2.7}). Numerical experiments on finite element
partitions with arbitrary polygonal element should be conducted for
a further assessment of the WG method.

 %\begin{wrapfigure}{r}{0.35\textwidth} % Inline image example
 \begin{figure}[ht]
\begin{center}
\includegraphics[width=0.35\textwidth]{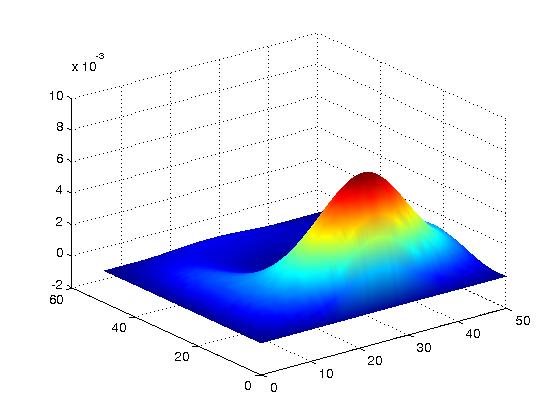}
\caption{WG finite element solution for a test case with
source point (13.3065,    0.0730994)  with mesh size 1/64.} \label{source1}
\end{center}
 \end{figure}
%\end{wrapfigure}

%\begin{wrapfigure}{r}{0.35\textwidth} % Inline image example
 \begin{figure}[ht]
\begin{center}
\includegraphics[width=0.35\textwidth]{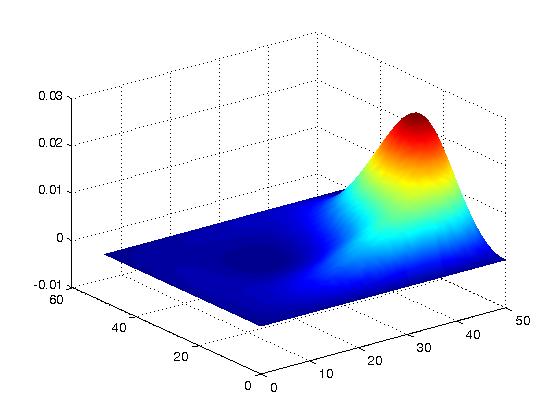}
\caption{WG finite element solution for a test case with
source point (49.8272,    13.5234)  with mesh size 1/64.} \label{source3}
\end{center}
 \end{figure}
%\end{wrapfigure}

\end{document}